\theoremstyle{definition}
\def\be{\begin{eqnarray}}
\def\ee{\end{eqnarray}}
\def\matZ{{\mathbb{Z}}}
\def\matR{{\mathbb{R}}}
\def\matQ{{\mathbb{Q}}}
\def\matC{{\mathbb{C}}}
\def\matN{{\mathbb{N}}}
\newcommand{\bA}{\mathsf{A}}
\newcommand{\bT}{\mathsf{T}}
\newcommand{\bK}{\mathsf{K}}
\newcommand{\Lie}{\mathrm{Lie}}
\let\bs\boldsymbol
\def\zz{{\bs z}}
\def\aa{{\bs a}}
\theoremstyle{definition}
\newtheorem{Definition}{Definition}
\newtheorem{Proposition}{Proposition}
\newtheorem{Lemma}{Lemma}
\newtheorem{Corollary}{Corollary}
\newtheorem{Conjecture}{Conjecture}
\newtheorem{Theorem}{Theorem}
\newtheorem{Note}{Note}
\newcommand{\fC}{\mathfrak{C}} 
\newcommand{\dotr}{\mbox{$\boldsymbol{\cdot}$}}
\def\tb {{\cal{V}}}  
\newcommand{\somespecialrotate}[3][]{%
\begingroup
\sbox\@tempboxa{#3}%
\@tempdima=.5\wd\@tempboxa
\sbox\@tempboxa{\rotatebox[#1]{#2}{\usebox\@tempboxa}}%
\advance\@tempdima by -.5\wd\@tempboxa
\mbox{\hskip\@tempdima\usebox\@tempboxa}%
\endgroup}
\def\qm {{{\textnormal{\textsf{QM}}}}}
\def \vss {\widehat{{\O}}_{{\rm{vir}}}}
\def\zz{{\bs z}}
\def\dd{{\bs d}}
\def\O {{\mathcal{O}}}
\begin{document}
\title{Quasimaps to zero-dimensional $A_{\infty}$-quiver varieties}
\author{H. Dinkins and A. Smirnov}
\date{}
\maketitle
\thispagestyle{empty}
	
\begin{abstract}
We consider the moduli spaces of quasimaps to zero-dimensional	$A_{\infty}$ Nakajima quiver varieties. An explicit combinatorial formula for the equivariant Euler characteristic of these moduli spaces is obtained and applications to symplectic duality are discussed.
\end{abstract}

\maketitle

\section{Introduction}
\subsection{\label{firsec}} 
In this paper we study the equivariant Euler characteristic of certain moduli spaces parameterizing vector bundles over $\mathbb{P}^1$ subject to special stability conditions.
These moduli spaces parametrize {\it quasimaps} $\mathbb{P}^1 \dashrightarrow X_{\lambda}$, where $X_{\lambda}$ is a zero-dimensional Nakajima variety associated with the $A_{\infty}$ quiver.

 These moduli spaces can be defined as follows. Let $\lambda$ be a Young diagram rotated  by $45^{\circ}$ as in Figure \ref{yng1}.  Let $\textsf{v}_i \in  \matN$, $i\in \matZ$ denotes the number of boxes in $i$'s vertical column. We assume that $i=0$ corresponds to the column which contains the corner  box of $\lambda$.

\begin{Definition} \label{maindef}
	{ \it For $\dd=(d_i)$, $i\in \matZ$ the moduli space $\qm^{\dd}_{\lambda}$ is the stack classifying the following data:
		
		$(\star)$ rank $\textsf{v}_i$ vector bundles $\tb_i$ over $\mathbb{P}^{1}$ of degrees $\deg(\tb_i)=d_i$,
		$i \in \matZ$. 
		
		$(\star,\star)$ stable section $s$ of the vector bundle $\mathscr{P} \oplus \mathscr{P}^{*}$ with
		\be  \label{mbn}
		\mathscr{P}= \tb_{0} \oplus \bigoplus\limits_{i\in \matZ} Hom( \tb_{i},  \tb_{i+1})
		\ee  which is non-singular at $\infty \in \mathbb{P}^{1}$ and satisfies the moment map equations.}
\end{Definition} 
\noindent
Let us explain the conditions on the section. The value of a section $s$ at a point $p\in \mathbb{P}^{1}$ provides the following data:
\be \label{pfiber}
s(p)=(I,J,X,Y)
\ee
with vectors $I\in \tb_{0,p}$, $J\in \tb_{0,p}^{*}$ and homomorphisms of vector spaces
$$
X=\bigoplus_{i\in \matZ} X_i, \ \ Y=\bigoplus_{i\in \matZ} Y_i, \ \ \ X_i\in \mathrm{Hom}(\tb_{i,p},\tb_{i+1,p}), \ \ Y_i\in \mathrm{Hom}(\tb_{i+1,p},\tb_{i,p}).
$$
We say that $s(p)$ is {\it stable} if $I$ is a cyclic vector, i.e. 
$$
\bigoplus_{i\in \matZ } \tb_{i,p} = \matC\langle X,Y\rangle I
$$
where $\matC\langle X,Y\rangle$ denotes the ring on (non-commutative) polynomials in $X$ and~$Y$. 

A section  $s$ is called {\it stable} if 
$s(p)$ is stable for all but finitely many points $p$ in $\mathbb{P}^1$.  The exceptional points are called {\it singularities} of $s$. Respectively, $s$ is {\it non-singular} at  $p$ if the point $p$ is not one of the singularities of~$s$.  

Finally, we say that a section $s$ {\it satisfies the moment map equations} if
$$
[X,Y]+I\otimes J=0 \in \bigoplus\limits_{i\in \matZ}\Big( \tb_{i,p} \otimes \tb_{i,p}^{*} \Big), \ \ \ \forall p \in \mathbb{P}^1.
$$

\subsection{} 

The stack $\qm^{\dd}_{\lambda}$ is an example of the moduli space of quasimaps to a GIT quotient constructed and investigated in \cite{qm}. In particular, as shown in \cite{qm} $\qm^{\dd}_{\lambda}$ is a Deligne-Mumford stack of finite type with perfect obstruction theory. We denote by $\vss^{\dd}$
the corresponding symmetrized virtual structure sheaf of~$\qm^{\dd}_{\lambda}$, see Section 2.3 below.

The moduli space $\qm^{\dd}_{\lambda}$ is equipped with a natural action of a two-dimensional torus $\bT:=\matC^{\times}_{q} \times \matC^{\times}_{\hbar}$. The torus $\matC^{\times}_{q}$ acts on $\mathbb{P}^{1}$ by scaling the homogeneous coordinates
$
[x:y]\to [x q: y].
$
The torus $ \matC^{\times}_{\hbar}$ scales the fibers of $\mathscr{P}^{*}$ with character $\hbar^{-1}$, i.e. for every point $p\in \mathbb{P}^1$ the data (\ref{pfiber}) transforms as:
$$
(I,J,X,Y) \to (I ,J\hbar^{-1},X,Y \hbar^{-1}).
$$

It is known that the $\matC_q^{\times}$-fixed locus $(\qm^{\dd}_{\lambda})^{\matC_q^{\times}}$ is proper (see \cite{pcmilect}). In particular, the $\bT$-equivariant Euler characteristic of the structure sheaf 
\be \label{ech}
\chi(\vss^{\dd}) \in K_{\bT}(pt)_{loc} \cong R[\hbar], \ \ R=\matQ(q) 
\ee
is well defined (here $loc$ stands for the $\matC^{\times}_{q}$-localized K-theory).

We introduce the following generating function:
\be \label{partfun}
\mathcal{Z}_{\lambda} = \sum\limits_{\dd}  \, \chi(\vss^{\dd}) \zz^{\dd} \in K_{\bT}(pt)_{loc}[[z]]
\ee
where we denote $\zz^\dd = \prod_i z_i^{d_i}$ for formal parameters $z_i$, usually called {\it K\"ahler parameters}. 

\subsection{} 
Our main result is an explicit combinatorial formula for the generating function $\mathcal{Z}_{\lambda}$. This formula has simplest form in terms of the plethystic exponential.

A virtual $\mathsf{B}$-module is a class $L\in K_{\mathsf{B}}(pt)$, and can be written as $L=L_a-L_b$, where $L_a$ and $L_b$ are $\mathsf{B}$-modules. In this case, if $a_1,\ldots,a_n$ and $b_1,\ldots,b_m$ are the $\mathsf{B}$-weights of $L_a$ and $L_b$, then
\be \label{virp}
L=a_1 + \dots + a_n -b_1 -\dots - b_m \in K_{\mathsf{B}}(pt)
\ee
Assuming $a_i,b_j\neq 1$ for all $i$ and $j$, we let 
$$
S^{\dotr}(L) =\bigoplus\limits_{k=0}^{\infty} S^{k}(L) \in K_{\mathsf{B}}(pt)_{loc}
$$
denote the $\mathsf{B}$-character of the corresponding symmetric algebra of $L$. Explicitly:
\be \label{plet}
S^{\dotr}(L) = \dfrac{(1-b_1)\cdots (1-b_m)}{(1-a_1)\cdots (1-a_n)}.
\ee
The map $S^{\dotr}$ is known as the {\it plethystic exponential}. 
It can be extended to a completion of  $K_{\mathsf{B}}(pt)$ using a suitable norm. This gives a convenient way to write infinite products, for instance:
$$
S^{\dotr}\Big(\dfrac{a}{1-q}\Big)=\prod\limits_{i=0}^{\infty} (1-a q^i)^{-1}
$$
which follows from (\ref{plet}) and the Taylor expansion of $(1-q)^{-1}$. 
See Section 2.1 in \cite{pcmilect} for a detailed discussion of the plethystic exponential.

\subsection{} 
For a box $\square \in \lambda$, let $h_\lambda(\square)$ be the hook in $\lambda$ based at $\square$ and $c(\square)$ be the content of $\square$. Recall, that for a box $\square$ with coordinates $(i,j)$ the content is $c(\square)=i-j$. It can be understood as the horizontal coordinate of $\square$ in Figure \ref{yng1}, normalized so that the corner box has content 0. Define the following element of $\matQ[h^{\pm 1},q^{\pm 1},z_i]$ given by the monomial: 
\begin{equation*}
   z_{\square} := \prod_{\square' \in h_{\lambda}(\square)} \widehat{z}_{c(\square')}
\end{equation*}
where the shifted parameters $\widehat{z}_i$ are
\begin{equation*}
    \widehat{z}_i:=\left(\frac{\hbar}{q}\right)^{\widehat{\sigma}_{\lambda}(i)} z_i \ \ \ \text{where} \ \ \   \widehat{\sigma}_{\lambda}(i):= \begin{cases} 
      \textsf{v}_{i-1}-\textsf{v}_{i} & \text{if} \ \ i \neq 0 \\
 \textsf{v}_{i-1}-\textsf{v}_{i}+1 & \text{if} \ \ i = 0
   \end{cases} 
\end{equation*}
See Figure \ref{yng1}.

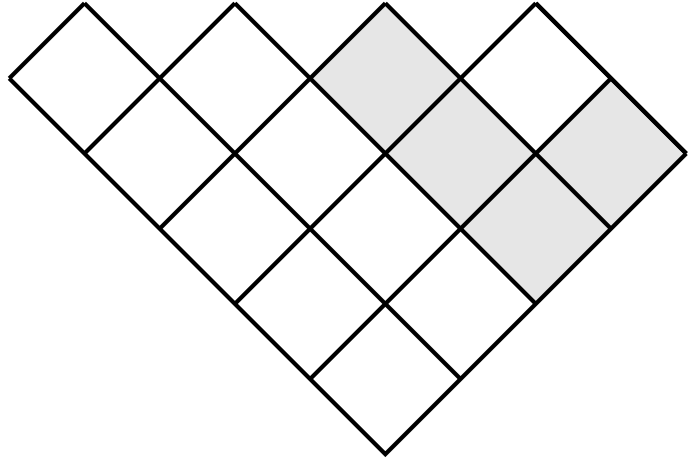
\begin{figure}[ht]
\centering
\begin{tikzpicture}[roundnode/.style={circle, draw=black, very thick, minimum size=5mm},squarednode/.style={rectangle, draw=black, very thick, minimum size=5mm}] 
\draw[ultra thick] (-5,5)--(0,0) -- (4,4);
\draw[ultra thick] (-4,6)--(1,1);
\draw[ultra thick] (-5,5)--(-4,6);
\draw[ultra thick] (-2,6)--(2,2);
\draw[ultra thick] (-4,4)--(-2,6);
\draw[ultra thick]  (0,6)--(3,3);
\draw[ultra thick] (2,6)--(4,4);
\draw[ultra thick] (-1,1)--(3,5);
\draw[ultra thick] (-2,2)--(2,6);
\draw[ultra thick] (-3,3)--(0,6);
\draw[ultra thick] (4,4)--(2,6);
\draw[fill=black,opacity=.1] (2,4)--(0,6)--(-1,5)--(2,2)--(4,4)--(3,5)--cycle;
\end{tikzpicture}
\caption{The partition $\lambda=(5,4,3,2)$ rotated by $45^{\circ}$ and $\textsf{v}=(\textsf{v}_i)=(\ldots,0,0,1,1,2,2,3,2,2,1,0,0,\ldots)$. The shaded boxes illustrate our convention for hooks. If $\square$ is the box at the base of the hook shown, then $z_{\square}= z_0 \left(\frac{\hbar}{q}z_1 \right) z_2 \left(\frac{\hbar}{q}z_3\right)$.} \label{yng1}
\end{figure}
We denote
\be \label{Lchar}
L_{\lambda}:=\sum_{\square \in \lambda} z_{\square} \in \mathbb{Q}[\hbar^{\pm 1},q^{\pm 1}, z_i]
\ee
Our main result is the following formula for $\mathcal{Z}_{\lambda}$. 
\begin{Theorem} \label{mthm}{\it For $|q|<1$ the power series (\ref{partfun}) is the Taylor series expansion of the function
\be\label{mainth}
\mathcal{Z}_{\lambda} = S^{\dotr}\left( \dfrac{1-\hbar}{1-q}  L_{\lambda} \right).
\ee	
holomorphic in the polydisc $|z_{\Box}|<1$. 
}
\end{Theorem}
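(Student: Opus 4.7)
My plan is to compute $\chi(\vss^{\dd})$ by $\bT$-equivariant K-theoretic virtual localization on $\qm^{\dd}_{\lambda}$ and then resum the fixed-point contributions into the plethystic-exponential form. Since $(\qm^{\dd}_{\lambda})^{\matC^{\times}_{q}}$ is proper, each $\chi(\vss^{\dd})$ reduces to a sum of local contributions. A $\matC^{\times}_{q}$-fixed quasimap has bundles $\tb_i$ splitting equivariantly as $\bigoplus_k \O(-d_{i,k})\otimes q^{w_{i,k}}$, with $I,J,X,Y$ respecting this grading. The non-singularity of $s$ at $\infty\in\mathbb{P}^1$ forces the fiber there to coincide with the unique $\bT$-fixed point of $X_{\lambda}$, which, being zero-dimensional, is a single point whose formal tangent characters are indexed by the boxes $\square\in\lambda$. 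Combined with the moment-map and cyclicity conditions, the $\matC^{\times}_{q}$-fixed data should then be parametrized by discrete combinatorial input --- most plausibly a non-negative integer (or partition) attached to each $\square\in\lambda$.

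For each fixed point $F$, the virtual tangent character $T^{\mathrm{vir}}_F$ is computable from the standard two-term deformation--obstruction complex for quasimaps: Euler characteristics over $\mathbb{P}^1$ of $\mathscr{P}\oplus\mathscr{P}^{*}$ (with $\hbar$-twist) minus those of $\bigoplus_i \mathrm{End}(\tb_i)$. Evaluating the symmetrized virtual structure sheaf $\vss^{\dd}$ at $F$ yields a product of factors $(w^{1/2}-w^{-1/2})^{\pm 1}$ over the $\bT$-weights of $T^{\mathrm{vir}}_F$; the polarization shift $\widehat{z}_i=(\hbar/q)^{\widehat{\sigma}_{\lambda}(i)}z_i$ should emerge from comparing the natural polarization inside $\mathscr{P}$ with the determinant line used to build $\vss^{\dd}$. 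Summing over $\dd$ and fixed points, one expects each box $\square\in\lambda$ to produce an independent geometric tower
$$\prod_{n\geq 0}\frac{1-\hbar\, z_{\square}q^n}{1-z_{\square}q^n},$$
precisely the plethystic form (\ref{mainth}).

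The hard step will be establishing this \emph{box-by-box} factorization. A priori the fixed-point combinatorics is indexed by vertex- or column-level data (since the bundles $\tb_i$ live at vertices of the quiver, i.e.\ columns of $\lambda$), whereas the target formula factors over individual boxes; bridging the two requires a non-trivial combinatorial identity. I foresee two viable routes. (i) \emph{Abelianization}: pass to a torus quotient in which all bundles split into line bundles, so box-by-box independence becomes manifest, then identify $z_{\square}=\prod_{\square'\in h_{\lambda}(\square)}\widehat{z}_{c(\square')}$ by re-summing along hooks. (ii) \emph{$q$-difference equations}: verify that both sides of (\ref{mainth}) solve the same qKZ/quantum-difference system with matching asymptotics as $q\to 0$, and invoke uniqueness of such solutions within the framework of \cite{qm}. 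Either route ultimately rests on the hook-content interpretation of the tangent characters of $X_{\lambda}$ at its unique $\bT$-fixed point, which is the zero-dimensional $A_{\infty}$ analogue of the classical hook-content formula familiar from Hilbert schemes of points.
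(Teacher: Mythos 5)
Your first paragraph is essentially right and matches how the paper starts: $\chi(\vss^{\dd})$ is computed by $\matC^{\times}_{q}$-localization, the bundles split as sums of line bundles, and the fixed data is one non-negative integer $d_{i,j}$ per box of $\lambda$, giving the explicit $q$-hypergeometric multi-sum (\ref{vf1}) (the paper simply quotes the integral/Pochhammer formula (\ref{verpow}) from \cite{pcmilect} rather than rederiving the virtual tangent space). But the proposal stops exactly where the theorem begins, and the step you defer is not a technicality: the integers $d_{i,j}$ are \emph{not} independent. The Pochhammer factors in (\ref{vf1}) vanish unless the $d_{i,j}$ increase weakly along both directions away from the corner box, so the fixed-point sum is indexed by the set $S_{\lambda}$ of interlacing tuples of partitions, not by a product over boxes. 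Consequently the assertion that ``each box produces an independent geometric tower'' is false at the level of the localization sum, and the box-by-box product (\ref{prform}) only emerges after a genuine resummation. Neither of your two suggested routes is carried out, and you give no concrete mechanism (no abelianized fixed locus, no identification of the $q$-difference system or proof of uniqueness of its solution), so as it stands the argument has a hole precisely at the combinatorial identity that constitutes the theorem.

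For comparison, the paper closes this gap with the Macdonald-polynomial vertex-operator formalism: the coefficients in (\ref{vf1}), reindexed by $S_{\lambda}$ as in (\ref{vf2}), are matched term by term with the Pieri coefficients $c_{\mu/\lambda}$ and $d_{\lambda/\mu}$ governing the action of $\Gamma_{\pm}$ on Macdonald polynomials, so that
$$
\mathcal{Z}_{\lambda}=\left\langle M_{\emptyset}\,\middle|\,\Gamma_{-}(1)\Lambda_{-r}\cdots\Lambda_{s}\,\middle|\,M_{\emptyset}\right\rangle ,
$$
up to a power of $\hbar/q$ controlled by a separate inductive lemma. The closed product form then falls out of the commutation relation $\Gamma_{-}(z)\Gamma_{+}(w)=\prod_{i\ge 0}\frac{1-\hbar (w/z)q^{i}}{1-(w/z)q^{i}}\,\Gamma_{+}(w)\Gamma_{-}(z)$: each pair consisting of a $-$ step followed by a later $+$ step on the boundary of $\lambda$ corresponds to exactly one box $\square$ and contributes exactly one tower $\prod_{n}\frac{1-\hbar z_{\square}q^{n}}{1-z_{\square}q^{n}}$. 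That is the precise sense in which the hook-content structure of $z_{\square}$ arises, and it is the ingredient your plan would still need to supply.
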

\noindent
Explicitly, (\ref{mainth}) means that $\mathcal{Z}_{\lambda}$ is an infinite product
\be \label{prform}
\mathcal{Z}_{\lambda} = \prod\limits_{\square \in \lambda} \prod\limits_{i=0}^{\infty} \dfrac{1-\hbar  z_{\square} q^{i}}{1- z_{\square} q^{i}}.
\ee
For $|q|<1$ the product  (\ref{prform}) converges and is clearly holomorphic in variables $z_{\Box}$ in $|z_{\Box}|<1$.  
\begin{Note}
	The formula (\ref{prform}), among other things, implies that $\mathcal{Z}_{\lambda}$ satisfies certain obvious $q$-difference  equations in the K\"ahler parameters. We refer to \cite{OS} for the general theory of these equations. See also the exposition \cite{KorZet}, where these equations are discussed for quiver varieties isomorphic to the cotangent bundles over partial flag varieties of $A$-type.
\end{Note}

\begin{Note}
We note that the formula (\ref{prform}) is similar to (but much simpler than) the formula for the generating function for the equivariant Euler characteristic of the Hilbert scheme of points on $\matC^3$,  conjectured by N.Nekrasov in \cite{NekFor} and then proved in \cite{pcmilect}. The proof of Nekrasov's conjecture can be deduced from factorization of the virtual structure sheaf of the Hilbert scheme of points, see Section 3.5 in \cite{pcmilect}. We believe that (\ref{prform}) can also be proven by a similar geometric argument. 
\end{Note}

\begin{Note}
Note that $X$ defined by (\ref{pfiber}) is a nilpotent endomorphism of $\oplus_i \tb_{i,p}$ corresponding to a partition $\lambda$. The problem of counting vector bundles over Riemann surfaces endowed with a nilpotent endomorphism (over finite fields $\mathbb{F}_q$)  was considered in~\cite{Mellit}.  The partition function in this case can be expressed through special values of Macdonald polynomials, see Theorem~5.5. We expect 
that these formulas should be closely related to ours under a certain specialization of the parameters.
\end{Note}

\section*{Acknowledgments} 

The final version of this paper will appear in \textit{International Mathematics Research Notices}. This work is supported by the Russian Science Foundation under grant 19-11-00062.

\section{Quasimaps to Nakajima Varieties}
\subsection{}
Let $Q$ be a quiver  and $\textsf{v}, \textsf{w} \in \matN^{|Q|}$
be the {\it dimension vectors}. Let $V_{i}, W_i$ be vector spaces with dimensions $\textsf{v}_i,\textsf{w}_i$. The associated representation of the quiver
\be \label{polar}
Rep_{Q}(\textsf{v},\textsf{w})=\bigoplus_{i} Hom(W_i,V_i) \oplus \bigoplus_{i\to j} Hom(V_i,V_j)
\ee
where $i\to j$ denotes the sum over arrows of the quiver, is equipped with a natural action of $G=\prod_i GL(V_i)$. This action extends to a Hamiltonian action of $G$ on $T^{*} Rep_{Q}(\textsf{v},\textsf{w})$. We denote by $\mu: T^{*} Rep_{Q}(\textsf{v},\textsf{w}) \to \mathfrak{g}^{*}$ with $\mathfrak{g}=Lie(G)$ the corresponding moment map. 

The Nakajima quiver variety is defined as the symplectic reduction of this space:
\be \label{qvar}
{\cal{M}}(\textsf{v}, \textsf{w}) = T^{*} Rep_{Q}(\textsf{v},\textsf{w})/\!\!/\!\!/\!\!/_{\!\!\theta} G:=\mu^{-1}(0)^{\theta-ss}/G
\ee
where $\mu^{-1}(0)^{\theta-ss}$ stands for the intersection of the set $\mu^{-1}(0)$ with the locus of semi-stable points
defined by a choice of a character $\theta\in char(G)$, known as the {\it stability parameter}. We refer to \cite{GinzburgLectures} and Chapter 2 in \cite{MO} for details about this construction.

\subsection{} 
For a Young diagram $\lambda$ we consider a Nakajima quiver variety  defined as follows. 
Let $Q$ be the $A_{\infty}$ quiver and let $\textsf{v}=(\textsf{v}_i)$ 
for $i\in \matZ$  be the set of natural numbers defined by $\lambda$ as in Section \ref{firsec}. Let $\textsf{w}_{i}=\delta_{i,0}$. Let  
\be \label{lamqu}
X_{\lambda}:={\cal{M}}(\textsf{v}, \textsf{w})
\ee
denote the quiver variety (\ref{qvar}) defined by these data for the character 
$$
\theta: (g_i) \to \prod_i\det (g_i).
$$
\begin{Proposition}
{\it The quiver variety $X_{\lambda}$ is zero-dimensional (i.e., geometrically $X_{\lambda}$ is a point).}	
\end{Proposition}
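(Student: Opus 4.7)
The plan is to reduce the statement to a combinatorial identity about the dimension vector $\textsf{v}$ using the standard dimension formula for Nakajima quiver varieties, and then to verify that identity by a short counting argument on the rotated profile of $\lambda$. First I would invoke the general formula $\dim_{\matC} {\cal{M}}(\textsf{v},\textsf{w})=2\,\textsf{v}\cdot\textsf{w}-\langle\textsf{v},C\textsf{v}\rangle$, where $C$ is the Cartan matrix of the quiver (see \cite{GinzburgLectures}). For the $A_{\infty}$ quiver one has $C_{ij}=2\delta_{ij}-\delta_{i,j+1}-\delta_{i+1,j}$, and substituting $\textsf{w}_{i}=\delta_{i,0}$ and then completing the square reduces the dimension to
\begin{equation*}
\dim X_{\lambda} \;=\; 2\textsf{v}_{0} \;-\; \sum_{i\in\matZ}(\textsf{v}_{i}-\textsf{v}_{i+1})^{2}.
\end{equation*}
Thus the proposition is equivalent to the purely combinatorial identity $\sum_{i}(\textsf{v}_{i}-\textsf{v}_{i+1})^{2}=2\textsf{v}_{0}$.

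The second step is to encode $\lambda$ via its rotated profile: the unique continuous piecewise linear function $f_{\lambda}:\matR\to\matR$ with slopes $\pm 1$, agreeing with $|x|$ outside a compact set, that traces the upper boundary of the rotated diagram in Figure~\ref{yng1}. A direct inspection of the rotated tiling yields $f_{\lambda}(i)=|i|+2\textsf{v}_{i}$ at every integer $i$, and because all vertices of the boundary sit at integer lattice points the slope of $f_{\lambda}$ is constant on each unit interval $(i,i+1)$. It follows that $\textsf{v}_{i}-\textsf{v}_{i+1}\in\{-1,0,1\}$, and $(\textsf{v}_{i}-\textsf{v}_{i+1})^{2}=1$ holds iff either $i\ge 0$ and the slope of $f_{\lambda}$ on $(i,i+1)$ equals $-1$, or $i\le -1$ and the slope equals $+1$.

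Finally I would finish with an elementary telescoping count on the two half-lines. On $[0,N]$ with $N\gg 0$, the numbers $n_{\pm}$ of unit intervals of slope $\pm 1$ satisfy $n_{+}+n_{-}=N$ and $n_{+}-n_{-}=f_{\lambda}(N)-f_{\lambda}(0)=N-2\textsf{v}_{0}$, forcing $n_{-}=\textsf{v}_{0}$; the mirror computation on $[-N,0]$ gives $\textsf{v}_{0}$ unit intervals of slope $+1$. Summing the two contributions produces the identity $\sum_{i}(\textsf{v}_{i}-\textsf{v}_{i+1})^{2}=2\textsf{v}_{0}$, whence $\dim X_{\lambda}=0$. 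The only step I would be careful about is the verification of $f_{\lambda}(i)=|i|+2\textsf{v}_{i}$ together with the sign alignment with the paper's content convention $c(\square)=i-j$; once that bookkeeping is correct, everything else reduces to routine manipulations.
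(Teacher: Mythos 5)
Your proposal is correct and follows essentially the same route as the paper: the standard dimension formula reduces the claim to the combinatorial identity $\sum_{i\in\matZ}(\textsf{v}_i-\textsf{v}_{i+1})^2=2\textsf{v}_0$ (your Cartan-matrix form $2\textsf{v}\cdot\textsf{w}-\langle\textsf{v},C\textsf{v}\rangle$ is identical to the paper's $2(\dim P-\dim G)$ after completing the square). The only difference is that you actually carry out, via the profile function and the telescoping slope count, the step the paper dismisses as ``elementary combinatorics,'' and your verification is correct.
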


\begin{proof}
The dimension  of (\ref{qvar}) equals	
$$
\dim {\cal{M}}(\textsf{v}, \textsf{w}) = 2(\dim Rep_{Q}(\textsf{v},\textsf{w}) - \dim  G)
$$	
For the quiver variety $X_{\lambda}$ this gives
$$
\dim X_{\lambda}=2\left( \textsf{v}_0 + \sum\limits_{i\in \matZ} \textsf{v}_i \textsf{v}_{i+1} - \sum\limits_{i\in \matZ} \textsf{v}_i^2\right).
$$
Elementary combinatorics shows that the last sum is zero for all $\textsf{v}$ defined from a Young diagram. 
\end{proof}

\begin{Note}
	One can show that all zero-dimensional quiver varieties associated with the $A_{\infty}$ quiver (with $A_n$ and $\widehat{A}_{n}$-quivers as special cases) are products of several copies of~(\ref{lamqu}). Informally, one says that zero-dimensional $A_{\infty}$-quiver varieties are labeled by tuples of Young diagrams. 
\end{Note}

\subsection{} 
In Section 4.3  of \cite{pcmilect} A.Okounkov introduces a moduli spaces of quasimaps to a Nakajima quiver variety:
\be \label{qmmod}
\qm^{\dd}_{\textrm{nonsing}\, p}(X)=\{ \textrm{degree $\dd$ quasimaps $\mathbb{P}^1\dashrightarrow X$ nonsingular at } p  \}   
\ee
where $p\in\mathbb{P}^1$. This moduli space is equipped with a natural morphism
\be \label{evmap}
\textrm{ev}_{p}: \qm^{\dd}_{\textrm{nonsing}\, p}(X) \to X
\ee
called {\it the evaluation map}. He then defines the {\it bare vertex function} of $X$ by:
$$
\textbf{V}(\zz)=\sum\limits_{\dd} \textrm{ev}_{p, *}(\vss^{\dd},\qm^{\dd}_{\textrm{nonsing}\, p}(X)) \zz^\dd  \in K_{\bT}(X)_{loc}[[\zz]]
$$
where $\vss^{\dd}$ denotes the symmetrized virtual structure sheaf of (\ref{qmmod}) and $\bT$ denotes a torus acting on $X$. We refer to Section 7.2 in \cite{pcmilect} for details of this construction, definitions, and well-definedness of the maps.

The moduli space from Definition 
\ref{maindef} is an example of a quasimap moduli space.

\begin{Proposition} \label{coinpro}{\it 
The moduli space $\qm^{\dd}_{\lambda}$ is isomorphic to 
$\qm^{\dd}_{\mathrm{nonsing},\infty}(X_{\lambda})$ for $X_{\lambda}$ defined by 	(\ref{lamqu}). }
\end{Proposition}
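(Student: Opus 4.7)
The plan is to unwind the general definition of $\qm^{\dd}_{\mathrm{nonsing},\infty}(X)$ for a Nakajima variety $X={\cal M}(\textsf{v},\textsf{w})$ from Section 4.3 of \cite{pcmilect} and verify, piece by piece, that its data specializes to the data listed in Definition \ref{maindef}. Recall that a degree-$\dd$ quasimap to the GIT quotient $T^{*}P/\!\!/\!\!/\!\!/_\theta G$ nonsingular at $\infty$ consists of a principal $G$-bundle $\mathcal{G}$ on $\mathbb{P}^1$ of degree $\dd$, a section $s$ of the associated bundle $\mathcal{G}\times_G T^{*}P$ satisfying $\mu(s)=0$ pointwise, landing in the $\theta$-stable locus at all but finitely many points, and nonsingular at $p=\infty$.

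Next I would specialize this description to our $A_\infty$ data. Because $G=\prod_i GL(V_i)$, a principal $G$-bundle on $\mathbb{P}^1$ is the same as a $\matZ$-indexed tuple of vector bundles $(\tb_i)$ with $\mathrm{rk}(\tb_i)=\textsf{v}_i$; only finitely many are nontrivial since $\lambda$ has finitely many boxes. Substituting $\textsf{w}_i=\delta_{i,0}$ into (\ref{polar}) gives $P=V_0\oplus\bigoplus_i \mathrm{Hom}(V_i,V_{i+1})$, so the associated bundle with fiber $P$ is precisely $\mathscr{P}$ from (\ref{mbn}). The cotangent direction contributes the dual factor $\mathscr{P}^{*}$, which is the source of the $\hbar^{-1}$ weights on $J$ and $Y$. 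Thus a section of the associated $T^{*}P$-bundle decomposes fiberwise into a quadruple $(I,J,X,Y)$ exactly as in (\ref{pfiber}).

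The moment-map and stability conditions then match verbatim. For $\textsf{w}_i=\delta_{i,0}$ the moment map $\mu:T^{*}P\to\mathfrak{g}^{*}$ is literally $[X,Y]+I\otimes J\in\bigoplus_i \mathrm{End}(V_i)$, so $\mu(s)=0$ pointwise reproduces the moment map equations of Definition \ref{maindef}. For the stability parameter $\theta=\prod_i\det$, King's criterion (see e.g.\ \cite{GinzburgLectures}) identifies the $\theta$-stable locus in $\mu^{-1}(0)$ with those $(I,J,X,Y)$ for which no proper graded subspace of $\bigoplus_i V_i$ containing $\mathrm{Im}(I)$ is preserved by both $X$ and $Y$; this is precisely the cyclicity condition $\bigoplus_i V_i=\matC\langle X,Y\rangle I$ imposed in Definition \ref{maindef}. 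The ``nonsingular at $\infty$'' condition is likewise identical on the two sides.

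The only remaining subtlety, and the step that requires the most care, is matching the degree labels. On the GIT side the degree of a quasimap is the homomorphism $\Pic(X_\lambda)\to\matZ$ induced by $\mathcal{G}$; under the identification of principal $G$-bundles with tuples of vector bundles $(\tb_i)$ this homomorphism is given on the standard generators by $d_i=\deg(\tb_i)$, matching the convention of Definition \ref{maindef}. With all four items (bundles, section, equations, stability) identified, the two descriptions agree as groupoid-valued functors on schemes over $\mathbb{P}^1$, which yields the claimed isomorphism of stacks.
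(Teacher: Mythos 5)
Your proposal is correct and follows the same route as the paper, which simply asserts that the general quasimap definition from \cite{qm}/\cite{pcmilect} specializes verbatim to the data of Definition \ref{maindef}; you have merely spelled out the dictionary (principal $G$-bundle $\leftrightarrow$ tuple $(\tb_i)$, associated $T^{*}P$-bundle $\leftrightarrow$ $\mathscr{P}\oplus\mathscr{P}^{*}$, King stability for $\theta=\prod_i\det$ $\leftrightarrow$ cyclicity of $I$, degree $\leftrightarrow$ $(\deg\tb_i)$) that the paper leaves implicit. No discrepancy to report.
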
 

\begin{proof}
From the definition of $\qm^{\dd}_{\textrm{nonsing},\infty}$
it is obvious that this stack classifies precisely the data of Definition \ref{maindef}.

	



\end{proof}

\begin{Corollary}
{\it The generating function (\ref{partfun}) is the vertex function of the quiver variety $X_{\lambda}$.}
\end{Corollary}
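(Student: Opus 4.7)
The plan is to unwind definitions and stitch together the preceding results. By Proposition \ref{coinpro} there is an isomorphism of stacks $\qm^{\dd}_{\lambda} \cong \qm^{\dd}_{\textrm{nonsing},\infty}(X_{\lambda})$. I would first check that this isomorphism is $\bT$-equivariant: the factor $\matC^{\times}_q$ acts through its action on the base $\mathbb{P}^{1}$ on both sides, while $\matC^{\times}_{\hbar}$ scales the cotangent directions $J$ and $Y$ with weight $\hbar^{-1}$, and this coincides with the canonical scaling of the cotangent fibers in the polarization (\ref{polar}) used to build the quasimap space on the right. Under this identification the symmetrized virtual structure sheaves agree as well, because both are produced by the general quasimap construction of \cite{qm} applied to the same GIT presentation of $X_{\lambda}$; in particular, the perfect obstruction theory defining $\vss^{\dd}$ is the pullback under the above isomorphism of the one used to define the vertex function.

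Next, since $X_{\lambda}$ is a point by the Proposition above, the evaluation map
\[
\textrm{ev}_{\infty}: \qm^{\dd}_{\textrm{nonsing},\infty}(X_{\lambda}) \longrightarrow X_{\lambda}
\]
is the structure morphism to $\mathrm{Spec}\,\matC$, so $\textrm{ev}_{\infty,*}$ is literally the $\bT$-equivariant Euler characteristic. Assembling the pushforwards into a generating series weighted by the K\"ahler parameters $\zz$ then gives
\[
\textbf{V}(\zz) \;=\; \sum_{\dd} \textrm{ev}_{\infty,*}\bigl(\vss^{\dd}\bigr)\, \zz^{\dd} \;=\; \sum_{\dd} \chi\bigl(\vss^{\dd}\bigr)\, \zz^{\dd} \;=\; \mathcal{Z}_{\lambda},
\]
which is the assertion.

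The only genuine task beyond matching notation is the $\bT$-equivariance of the isomorphism from Proposition \ref{coinpro}, and this is the step I expect to be the mildest obstacle; it is not conceptual, merely a matter of tracking the weights through the identification of sections of $\mathscr{P}\oplus \mathscr{P}^{*}$ with maps $\mathbb{P}^{1} \dashrightarrow X_{\lambda}$. Once this bookkeeping is done, the Corollary is an immediate comparison of the two generating series.
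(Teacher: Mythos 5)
Your argument is correct and is essentially the paper's own proof: invoke Proposition \ref{coinpro}, observe that since $X_{\lambda}$ is a point the evaluation map is the structure morphism so $\textrm{ev}_{\infty,*}(\vss^{\dd})=\chi(\vss^{\dd})$, and compare generating series. The additional bookkeeping you flag ($\bT$-equivariance of the isomorphism and the matching of obstruction theories) is implicit in the paper's identification of the two moduli problems and does not change the route.
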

\begin{proof}
By Proposition (\ref{coinpro}) the evaluation map (\ref{evmap}) is the map to a point, thus $\textrm{ev}_{\infty,*}(\vss^{\dd},\qm^{\dd}_{\textrm{nonsing},\infty}(X_{\lambda}))=\chi(\vss^{\dd})$. \end{proof}

\subsection{}
The vertex functions for the Nakajima quiver varieties are equipped with natural integral representations of Mellin-Barnes type developed in \cite{OkBethe}. We also refer to \cite{Pushk1,Pushk2} where these integral representations were investigated for the quiver varieties isomorphic to cotangent bundles over Grassmannians and partial flag varieties. Here we recall the details of this construction. 

In the case of a type $A_n$ quiver variety $X$ with vertex set given by an interval $I\subset\mathbb{Z}$, dimension vector $\textsf{v}$, and framing dimension vector $\textsf{w}$, we have the following description.

For a character $w_1+ \ldots + w_m \in K_{\bT}(X)$ let us denote
\begin{equation}\label{phidef} \Phi(w_1+\ldots + w_m)=\varphi(w_1)\dots \varphi(w_m), \ \ \ \ \ \varphi(w):=S^{\dotr}\Big(\dfrac{w}{1-q}\Big)=\prod_{n=0}^{\infty}\left( 1-w q^n \right).
\end{equation}
This definition extends by linearity to polynomials with negative coefficients. Let $\mathcal{P}$ be the virtual bundle over a Nakajima variety $X$ associated to the $G$-module
\begin{equation*} 
Rep_{Q}(\textsf{v},\textsf{w})-\mathfrak{g}=\bigoplus_{i \in I} Hom(W_i,V_i) \oplus \bigoplus_{i\to j} Hom(V_i,V_j)- \bigoplus_{i\in I} Hom(V_i,V_i)
\end{equation*}
where the first term is given by (\ref{polar}) and $\mathfrak{g}$ is the adjoint representation of $G$. The subtraction of $\mathfrak{g}$ corresponds to taking the quotient by $G$ in (\ref{qvar}). In the terminology of \cite{MO} (see Section 2.2.7) this class is the canonical polarization of $X$, i.e. the choice of the half of the tangent bundle:
$$
T X =\mathcal{P} + \hbar^{-1} \mathcal{P}^{*} \in K_{\bT}(X).
$$
If $x_{i,1},\dots,x_{i,\mathsf{v}_i}$ denote the Grothendieck roots of $i$-th tautological bundle (i.e. the bundle over $X$ associated to the $G$-module $V_i$) then
\begin{align*}
    \mathcal{P}= \sum_{i \in I} \left( \sum_{j=1}^{\mathsf{v}_i} x_{i,j}^{-1} \right) \left( \sum_{j=1}^{\mathsf{v}_{i+1}} x_{i+1,j} \right) +\sum_{i \in I} \left( \sum_{j=1}^{\textsf{w}_i} a_{i,j}^{-1} \right) \left( \sum_{j=1}^{\mathsf{v}_i} x_{i,j} \right) \\
    - \sum_{i\in I} \left( \sum_{j=1}^{\mathsf{v}_i} x_{i,j}^{-1} \right) \left( \sum_{j=1}^{\mathsf{v}_i} x_{i,j} \right) \in K_\bT(X).
\end{align*}
where $a_{i,j}$ denote the equivariant parameters associated to the framings for $j=1,\ldots, \mathsf{w}_i$.

\noindent
Recall that the vertex functions have natural integral representation  see Section 1.1.5-1.1.6 in \cite{OkBethe}. The integral has the form of a Mellin transform of a certain function  of all Grothendieck roots $\Phi({\bs x},q,\hbar)$ (which can be explicitly expressed in terms of $\mathcal{P}$, see below):
$$
{\bf V}_p(a,z)= \alpha_p \, \int\limits_{C_{p}} \Phi({\bs x},q,\hbar)  {\bf e}({\bs x},\zz) \prod\limits_{i,j} \dfrac{d x_{i,j} }{2 \pi \sqrt{-1} x_{i,j}} 
$$
In this representation, the Grothendieck roots $x_{i,j}$ play a role of coordinates on $\matC^{m}$ where $m=\textrm{rk}(G)$ and the equivariant parameters and K\"ahler parameters are understood as  complex parameters of the integral.  The integral is taken over a cycle $C_p \subset \matC^{m}$ of real dimension $m$ which captures the poles of the function  $\Phi({\bs x},q,\hbar)$
located at the $q$-geometric sequences:
\be \label{poles}
x_{i,j}= x_{i,j}(p) q^{d}, \ \  d=0,1,2 \dots
\ee
where $x_{i,j}(p)$, $j=1,\dots, \mathsf{v}_i$ are the $\bT$-weights of the fiber  $\left.\tb_i\right|_{p}$ of $i$-th tautological bundle at the fixed point $p$. The function ${\bf e}({\bs x},\zz)$ is a $q$-analog of the integral kernel of the Mellin transform:
$$
{\bf e}({\bs x},\zz):=\exp\Big(\dfrac{1}{\ln{q}} \sum\limits_{ i \in I } \ln(z_i)  \ln(\det \mathcal{V}_i) \Big)=\exp\Big(\dfrac{1}{\ln{q}} \sum\limits_{ i \in I }\sum_{j=1}^{\textsf{v}_i} \ln(z_i)  \ln(x_{i,j}) \Big).
$$
For this function we have $\left.{\bf e}({\bs x}  ,\zz)\right|_{x_{i,j}=x_{i,j} q}= z_i {\bf e}( q ,\zz)$ and after picking residues (\ref{poles}) it give rise to a power series in K\"ahler variables $z_{i}$. The prefactor $\alpha_p$ is to normalize the vertex function by
$$
{\bf V}_p(a,z)= 1+ O(z_i)
$$
near $z_i=0$. 

We write $\mathcal{P}_p$ for the character of the virtual bundle $\mathcal{P}$ at a fixed point $p$. In explicit computations we can evaluate the contour integral as a sum over residues (\ref{poles}), which gives the following explicit formula for the vertex function:
\begin{equation} \label{verpow}
{\bf V}_p(\aa,\zz)= \frac{1}{\Phi\Big((q-\hbar) \mathcal{P}_p \Big) {\bf e}({\bs x}(p),\zz)} {\bf \tilde V}_p(\aa,\zz),
\end{equation}
where
\begin{equation}\label{vertild}
\tilde{{\bf V}}_p(\aa,\zz)= \prod\limits_{i,j} \int\limits_{0}^{x_{i,j}(p)}  d_q x_{i,j} \, \Phi\Big((q-\hbar) \mathcal{P} \Big) {\bf e}({\bs x},\zz), 
\end{equation}
the function $\Phi\Big((q-\hbar) \mathcal{P} \Big)$ is defined by (\ref{phidef}) and the $q$-integral stands for the Jackson $q$-integral over all Grothendieck roots, i.e., the infinite sum over $q$-geometric sequences:
$$
\int\limits_{0}^{a}  d_q x f(x) :=\sum\limits_{n=0}^{\infty}\, f(a q^n),
$$
For an indeterminate $x$, we define the $q$-Pochhammer symbol by
\begin{equation*}
(x)_d:=\frac{\varphi(x)}{\varphi(xq^d)} 
\end{equation*}
It is clear that (\ref{verpow}) is a power series in $z_i$ with coefficients which are given by combinations of $q$-Pochhammer symbols, i.e., is a $q$-hypergeometric function.

Let us note that the sum (\ref{verpow}) is exactly the sum over the fixed points $(\qm^{d}_{\lambda})^{\bT}$ which arises when one computes the Euler characteristic (\ref{ech}) by localization in K-theory.  
A fixed point in $(\qm^{d}_{\lambda})^{\bT}$ corresponds bundles in (\ref{mbn}) which decompose into sums of line bundles 
$\mathscr{P}=\bigoplus_k \mathcal{O} (d_k)$.  The $\bT$-character
of the virtual tangent space at this point can be computed by 
Lemma 1 in \cite{Pushk1}. The corresponding contribution to localization formula is given by the $q$-Pochhammer symbols.  

\subsection{}

Let us illustrate formula (\ref{verpow}) in the case of $X_{\lambda}$ for $\lambda=(1)$. As previously remarked, $X_{\lambda}=\{p\}$ is just a point. Here we have only one framing parameter and one Grothendieck root, which we write as $a$ and $x$, respectively. Then
$$
\mathcal{P}= \frac{x}{a} - 1
$$
and :
$$
\Phi\left(\left(q-\hbar \right) \mathcal{P}\right) = \frac{\varphi(q\frac{x}{a})\varphi(\hbar)}{ \varphi(q) \varphi(\hbar \frac{x}{a})} \implies \Phi\left(\left(q-\hbar \right) \mathcal{P}_p\right) = 1 
$$
since the weight of the tautological bundle is $x(p)=a$. So, taking the sum over the $q$-geometric sequence
$$
x=a q^{d}, \ \ d=0,1,2,\dots
$$
we obtain
$$
{\bf V}_p(a,z)= \frac{1}{\Phi\Big((q-\hbar) \mathcal{P}_p \Big) {\bf e}(a,z)} {\bf \tilde V}_p(a,z) = \frac{1}{{\bf e}(a,z)}  \sum_{d=0}^\infty \frac{\varphi(q q^d) \varphi(\hbar)}{\varphi(q) \varphi(\hbar q^d)} {\bf e}(a q^d ,z )
$$
Now, 
$$
{\bf e}(a q^d,z) = \exp\left( \frac{1}{\ln(q)} \ln(z) \ln(a q^d) \right) = z^d {\bf e}(a,z)
$$
and so
$$
{\bf V}_p(a,z)= \sum_{d=0}^\infty \frac{(\hbar)_d}{(q)_d} z^d = 1+ \dfrac{1-\hbar}{1-q} z + \dfrac{(1-\hbar)(1- \hbar q)}{(1-q)(1-q^2) } z^2 +\dots
$$
which is the standard $q$-binomial series
$$
\sum_{d=0}^\infty \frac{(\hbar)_d}{(q)_d} z^d=\prod\limits_{n=1}^{\infty} \dfrac{1-z \hbar q^n}{1-z q^n}
$$
This proves Theorem \ref{mthm} in this simplest example.

\subsection{}
In the case of $X_{\lambda}$ for general $\lambda$, the discussion above provides the following explicit description of the generating function (\ref{partfun}). The weights of the bundle $\mathcal{V}_i$ are 
\begin{equation*}
    \begin{cases} 
    a\hbar^{j-1} & i \geq 0 \\ a\hbar^{j-i-1}& i < 0
    \end{cases}
\end{equation*}
where $a$ is the framing parameter and $j=1,\ldots,\mathsf{v}_i$. Then
\begin{align}\nonumber
  \mathcal{Z}_{\lambda} = \sum_{d_{i,j}} \prod_{j=1}^{\mathsf{v}_0} \frac{(\hbar^j)_{d_{0,j}}}{(q \hbar^{j-1})_{d_{0,j}}} \prod_{i<0} \prod_{j=1}^{\mathsf{v}_i} \prod_{k=1}^{\mathsf{v}_{i+1}} \frac{(\hbar^{k-j})_{d_{i+1,k}-d_{i,j}}}{(q\hbar^{k-j-1})_{d_{i+1,k}-d_{i,j}}}  \\  \label{vf1}
    \prod_{i\geq 0} \prod_{j=1}^{\mathsf{v}_i} \prod_{k=1}^{\mathsf{v}_{i+1}}  \frac{(\hbar^{k-j+1})_{d_{i+1,k}-d_{i,j}}}{(q\hbar^{k-j})_{d_{i+1,k}-d_{i,j}}} \prod_{i\in \mathbb{Z}} \prod_{j,k=1}^{\mathsf{v}_i} \frac{(q\hbar^{k-j})_{d_{i,k}-d_{i,j}}}{(\hbar^{k-j+1})_{d_{i,k}-d_{i,j}}} \, \zz^\dd
\end{align}
where each $d_{i,j}$ is summed from $0$ to $\infty$.

\section{Macdonald Polynomials and Vertex Operators}
\subsection{}
The proof of Theorem \ref{mthm} will use properties of Macdonald polynomials, which we will now introduce. Let $\mathcal{F}= \mathbb{C}[p_1,p_2, \ldots ] \otimes \mathbb{C}(\hbar, q)$ be the space of symmetric polynomials in infinitely many variables, with coefficients in $\mathbb{C}(\hbar,q)$. Following \cite{mac}, we define an inner product on $\mathcal{F}$ by
\begin{equation} \label{inprod}
    \langle p_{\lambda}, p_{\mu} \rangle := \delta_{\lambda,\mu} \prod_{n \geq 1} n^{m_n} m_n! \prod_{i=1}^{l(\lambda)} \frac{1-q^n}{1-\hbar^n} \ \ \  \text{where}  \ \ \  m_n=|\{k \mid \lambda_k=n \}|
\end{equation}
and $l(\lambda)$ is the length of $\lambda$. The standard notation of Macdonald is related to ours by $\hbar=t$.

The Macdonald polynomials $\{M_{\lambda} \}$, indexed by partitions, are the unique basis of $\mathcal{F}$ orthogonal with respect to this inner product satisfying a certain triangularity condition (Chapter 6 in \cite{mac}). In particular, the Macdonald polynomials are uniquely defined by asserting that
$$
\lambda \neq \mu \implies \langle M_{\mu}, M_{\lambda}\rangle = 0
$$
$$
M_{\lambda} = \sum_{\mu \leq \lambda} u_{\lambda\mu} m_{\mu}, \ \ \ u_{\lambda\lambda}=1,  \ \ u_{\lambda,\mu} \in \matQ(q,h)
$$
where $m_\mu$ is the monomial symmetric function corresponding to $\mu$
 and
$$
\mu \leq \lambda \iff \mu_1 + \ldots + \mu_i \leq \lambda_1+\ldots +\lambda_i, \ \ \forall i \geq 0
$$
\subsection{}
We introduce the following vertex operators acting on the completion of $\mathcal{F}$:

\begin{align*}
    \Gamma_{+}(z) &:=\exp \left( \sum_{n=1}^{\infty} \frac{1-\hbar^n}{1-q^n}\frac{z^n}{n}p_n \right) \\
    \Gamma_{-}(z) &:=\exp \left( \sum_{n=1}^{\infty} \frac{1}{z^n}\frac{\partial}{\partial p_n} \right) \\
    z^L \cdot M_\mu &:= z^{|\mu|} M_\mu
\end{align*}
where $z$ is any Laurent monomial in the variables $z_i,\hbar,q$. The inner product (\ref{inprod}) can be interpreted as
\begin{equation}
    \langle p_{\lambda}, p_{\mu} \rangle = \delta_{\lambda,\mu} \left( \prod_{i=1}^{l(\lambda)} \lambda_i \frac{1-q^{\lambda_i}}{1-\hbar^{\lambda_i}} \frac{\partial}{\partial p_{\lambda_i}} \right) p_{\mu}
\end{equation}
which shows that $\Gamma_{-}(\frac{1}{z})$ is the adjoint of $\Gamma_{+}(z)$. 
We have the following:
\begin{Proposition}[\cite{mac} Section 6.6, \cite{glo} Theorem 1.2] Let $(n)$ denote the partition $(n,0,\ldots,0)$. Then
\begin{equation}\label{pieri}
    M_{(n)} M_{\lambda} = \frac{(q)_n}{(\hbar)_n} \sum_{\substack{\mu \succ \lambda \\ |\mu|-|\lambda|=n}} c_{\mu/\lambda} M_{\mu}
\end{equation}
where $\mu \succ \lambda$ means that $\mu_1 \geq \lambda_1 \geq \mu_2 \geq \lambda_2 \geq \ldots$, which we refer to by saying that $\mu$ \textit{interlaces} $\lambda$ \textit{from above}. Here, 
\begin{equation*}
      c_{\mu / \lambda} = \prod_{1\leq i \leq j \leq l(\mu)} \frac{(\hbar^{j-i+1})_{\mu_i-\lambda_j}}{(q\hbar^{j-i})_{\mu_i-\lambda_j}} \frac{(q\hbar^{j-i})_{\mu_i-\mu_j}}{(\hbar^{j-i+1})_{\mu_i-\mu_j}} \prod_{1\leq i<j \leq l(\mu)} \frac{(\hbar^{j-i})_{\lambda_i-\mu_j}}{(q\hbar^{j-i-1})_{\lambda_i-\mu_j}} \frac{(q \hbar^{j-i-1})_{\lambda_i-\lambda_j}}{(\hbar^{j-i})_{\lambda_i-\lambda_j}}
\end{equation*}
where we adopt the convention that $\lambda_k=0$ for $k> l(\lambda)$.
\end{Proposition}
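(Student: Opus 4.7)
The identity is Macdonald's Pieri rule for the row partition $(n)$ written in the $P$-normalization ($u_{\lambda\lambda}=1$). I plan to proceed in three stages: absorb the scalar prefactor as a normalization constant, derive the Pieri rule for the dual generator $Q_{(n)}$ via the Cauchy kernel and the vertex operators already introduced, and finally rearrange Macdonald's cell-product formula into the stated row-pair product.

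The prefactor $(q)_n/(\hbar)_n$ comes from the norm of $M_{(n)}$. Expanding $M_{(n)}$ in the power-sum basis and using the triangularity/orthogonality characterization (equivalently, Macdonald's norm formula specialized to a single row) yields $\langle M_{(n)}, M_{(n)}\rangle = (q)_n/(\hbar)_n$, so the dual basis element is $Q_{(n)} = \frac{(\hbar)_n}{(q)_n}M_{(n)}$. Consequently, the stated identity is equivalent to the Pieri rule $Q_{(n)}\,M_\lambda = \sum_{\mu\succ\lambda,\ |\mu|-|\lambda|=n} c_{\mu/\lambda}\,M_\mu$, which is what remains to be proved.

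To obtain this rule, I would use the Cauchy identity
\begin{equation*}
\prod_{i,j}\frac{\varphi(\hbar x_i y_j)}{\varphi(x_i y_j)} = \sum_\mu M_\mu(x)\,Q_\mu(y).
\end{equation*}
Specializing $y$ to a single variable $z$ and reading off coefficients of $z^n$ gives a generating function for $Q_{(n)}(x)$ which, in the vertex-operator language of the preceding subsection, is exactly $\Gamma_+(z)\cdot 1$. Since $\Gamma_-(1/z)$ is adjoint to $\Gamma_+(z)$, the coefficient $\langle Q_{(n)}M_\lambda, M_\mu\rangle/\langle M_\mu, M_\mu\rangle$ is a skew Macdonald function associated to a single row; such a function vanishes unless $\mu/\lambda$ is a horizontal $n$-strip---equivalently $\mu\succ\lambda$ with $|\mu|-|\lambda|=n$---which pins down the index set of the sum.

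The main obstacle is matching the explicit value of $c_{\mu/\lambda}$. Macdonald's identity (VI.6.24) presents the Pieri coefficient as a product over cells in the columns meeting the strip $\mu/\lambda$, expressed through arm- and leg-lengths. To recover the row-pair product of the statement, I would telescope the column contributions row by row: the interlacing $\mu\succ\lambda$ restricts the contributing cells in row $i$ to those with column index $\lambda_{i+1}<j\le\mu_i$, and regrouping arm$+1$ with leg across pairs of rows produces the quantities $q\hbar^{j-i}$, $\hbar^{j-i+1}$ together with the $q$-Pochhammer lengths $\mu_i-\lambda_j$, $\mu_i-\mu_j$, $\lambda_i-\mu_j$, $\lambda_i-\lambda_j$ appearing in the statement. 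The combinatorial bookkeeping in this rearrangement---together with handling the convention $\lambda_k=0$ for $k>l(\lambda)$---is the step that requires the most care; every other ingredient is a direct quote from Chapter VI of \cite{mac}.
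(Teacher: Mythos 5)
The paper gives no proof of this Proposition at all: it is quoted verbatim from \cite{mac} (Section 6.6) and \cite{glo} (Theorem 1.2), so your derivation should be judged against those sources rather than against an argument in the text. Your reductions are correct and are exactly the standard route: the norm computation $\langle M_{(n)},M_{(n)}\rangle=(q)_n/(\hbar)_n$ (arms $0,\dots,n-1$, all legs $0$ in Macdonald's norm formula) is right, so (\ref{pieri}) is equivalent to $Q_{(n)}M_\lambda=\sum_{\mu}c_{\mu/\lambda}M_\mu$, which is Macdonald's Pieri rule $P_\lambda g_n=\sum_\mu \varphi'_{\mu/\lambda}P_\mu$; and the Cauchy-kernel/adjointness argument correctly pins the index set down to horizontal $n$-strips, i.e.\ $\mu\succ\lambda$ with $|\mu|-|\lambda|=n$ (this detour is consistent with, though not needed beyond, quoting VI.(6.24) directly). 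The one substantive step, however, is the claim that Macdonald's coefficient $\varphi'_{\mu/\lambda}$ --- a product of ratios $b_\lambda(s)/b_\mu(s)$ of arm/leg factors over cells in the columns meeting the strip --- equals the stated double product over pairs of rows with the four Pochhammer lengths $\mu_i-\lambda_j$, $\mu_i-\mu_j$, $\lambda_i-\mu_j$, $\lambda_i-\lambda_j$. You describe the telescoping that should accomplish this but do not carry it out, and this rearrangement is precisely the content the paper outsources to \cite{glo}, Theorem 1.2. So your proposal is faithful to the paper's (implicit) approach and correct in every step you actually execute, but as a self-contained proof it is incomplete at exactly the point where the paper also defers to the literature; either perform the cell-to-row-pair telescoping explicitly or cite \cite{glo} for it.
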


It is known that $\Gamma_{+}(z)$ is equivalent to multiplication by the following infinite sum (see \cite{mac} Section 6.2):
\begin{align}\label{gammasum}
        \Gamma_{+}(z) = \sum_{n\geq 0} \frac{(\hbar)_n}{(q)_n} M_{(n)} z^n 
\end{align}
So we can write the action of $\Gamma_+(z)$ in (\ref{gammasum}) as
\begin{equation}\label{pieriplus}
    \Gamma_{+}(z) M_{\lambda} = \sum_{\mu \succ \lambda} c_{\mu / \lambda} z^{|\mu|-|\lambda|} M_{\mu}  
\end{equation}
Since $\Gamma_{-}(\frac{1}{z})$ is the adjoint of $\Gamma_{+}(z)$, we can use this to obtain a formula for the action of $\Gamma_{-}(z)$ on Macdonald polynomials:
\begin{Proposition} Let $||M_\lambda||^2=\langle M_\lambda, M_\lambda \rangle$. Then
\begin{align} \nonumber
 \Gamma_{-}(z) M_{\lambda} &= \sum_{\mu \prec \lambda} \frac{||M_{\lambda}||^2}{||M_{\mu}||^2} c_{\lambda/\mu} M_{\mu} z^{|\mu|-|\lambda|} \\ \label{pieriminus}
    &= \sum_{\mu \prec \lambda} d_{\lambda/\mu} M_{\mu} z^{|\mu|-|\lambda|}
\end{align}
where
\begin{align*}
    d_{\lambda/ \mu}= \prod_{i=1}^{l(\mu)} \frac{(\hbar^{l(\lambda)-i+1})_{\mu_i}}{(q \hbar^{l(\lambda)-i})_{\mu_i}} \prod_{i=1}^{l(\lambda)} \frac{(q \hbar^{l(\lambda)-i})_{\lambda_i}}{(\hbar^{l(\lambda)-i+1})_{\lambda_i}}  \prod_{1 \leq i \leq j \leq l(\lambda)} \frac{(\hbar^{j-i+1})_{\lambda_i-\mu_j}}{(q\hbar^{j-i})_{\lambda_i-\mu_j}} \frac{(q\hbar^{j-i})_{\mu_i-\mu_j}}{(\hbar^{j-i+1})_{\mu_i-\mu_j}} \\
    \prod_{1\leq i < j \leq l(\lambda)} \frac{(q\hbar^{j-i-1})_{\lambda_i-\lambda_j}}{(\hbar^{j-i})_{\lambda_i-\lambda_j}} 
 \frac{(\hbar^{j-i})_{\mu_i-\lambda_j}}{(q\hbar^{j-i-1})_{\mu_i-\lambda_j}}
\end{align*}
\end{Proposition}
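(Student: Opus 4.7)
The strategy is to obtain $\Gamma_-(z) M_\lambda$ by dualizing the Pieri-type formula (\ref{pieriplus}) for $\Gamma_+(z)$ via the adjointness relation between $\Gamma_+(w)$ and $\Gamma_-(1/w)$, and then converting the resulting norm ratio $||M_\lambda||^2/||M_\mu||^2$ into the explicit product appearing in $d_{\lambda/\mu}$ using the Macdonald norm formula.

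For the first equality, I would expand $\Gamma_-(z) M_\lambda$ in the orthogonal basis $\{M_\mu\}$, so that the coefficient of $M_\mu$ equals $\langle \Gamma_-(z) M_\lambda, M_\mu\rangle / ||M_\mu||^2$. Since (\ref{inprod}) is symmetric and $\Gamma_-(1/w)$ is adjoint to $\Gamma_+(w)$, setting $w = 1/z$ yields
\[
\langle \Gamma_-(z) M_\lambda, M_\mu\rangle = \langle M_\lambda, \Gamma_+(1/z) M_\mu\rangle.
\]
Substituting (\ref{pieriplus}) for $\Gamma_+(1/z) M_\mu$ and using orthogonality, the only surviving contribution in the sum over $\nu \succ \mu$ is the one with $\nu = \lambda$; it is nonzero precisely when $\lambda \succ \mu$ (equivalently $\mu \prec \lambda$) and equals $c_{\lambda/\mu}\, z^{-(|\lambda|-|\mu|)}\, ||M_\lambda||^2$. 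Dividing by $||M_\mu||^2$ produces the first equality of the proposition.

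For the second equality, I would use the explicit norm formula for Macdonald polynomials (Macdonald \cite{mac}, Chapter VI \S6) to evaluate $||M_\lambda||^2/||M_\mu||^2$. Comparing the stated $d_{\lambda/\mu}$ with the expression for $c_{\lambda/\mu}$ in (\ref{pieri}), the norm ratio must accomplish two things: interchange the ``diagonal'' blocks $(q\hbar^{j-i})_{\lambda_i-\lambda_j}/(\hbar^{j-i+1})_{\lambda_i-\lambda_j}$ with $(q\hbar^{j-i})_{\mu_i-\mu_j}/(\hbar^{j-i+1})_{\mu_i-\mu_j}$ for $i \leq j$ (and the shifted analogue for $i<j$), and supply the single-index prefactors $\prod_{i=1}^{l(\mu)} (\hbar^{l(\lambda)-i+1})_{\mu_i}/(q\hbar^{l(\lambda)-i})_{\mu_i}$ together with $\prod_{i=1}^{l(\lambda)} (q\hbar^{l(\lambda)-i})_{\lambda_i}/(\hbar^{l(\lambda)-i+1})_{\lambda_i}$. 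The main obstacle is thus the bookkeeping in this second step: once both $||M_\lambda||^2$ and $||M_\mu||^2$ are rewritten in columnwise product form using the parts of the partitions (rather than box-indexed arm/leg statistics), the required cancellations are a purely mechanical identity in $q$-Pochhammer symbols, and no new input is needed beyond the standard norm formula and (\ref{pieri}).
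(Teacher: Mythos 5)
Your proposal is correct and follows exactly the route of the paper's own (very terse) proof: the first equality via the adjointness of $\Gamma_{-}(1/w)$ and $\Gamma_{+}(w)$ applied to the Pieri expansion (\ref{pieriplus}) together with orthogonality of the $M_\mu$, and the second equality by rewriting the norm ratio $||M_{\lambda}||^2/||M_{\mu}||^2$ with the explicit Macdonald norm formula. Your version just spells out the bookkeeping that the paper leaves implicit.
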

\begin{proof}
The first equality follows immediately from the adjoint relationship, and the second can be proven as a direct computation using the well-known formula for the norm of the Macdonald polynomials.
\end{proof}

\subsection{}

We will use the following properties of the vertex operators:
\begin{Proposition}\label{commutation} As operators on the completion of $\mathcal{F}$, we have:
\begin{enumerate}[label=(\alph*)]
  \item\label{pt1} $$\Gamma_{-}(z) M_{\emptyset} =  M_{\emptyset}$$
  where $\emptyset$ denotes the empty partition.
    \item\label{pt2} $$z^L\Gamma_{\pm}(w) = \Gamma_{\pm}(wz) z^L$$
      \item\label{pt3} $$\Gamma_-(z) \Gamma_+(w) = \prod_{i=0}^\infty \frac{1-\hbar \frac{w}{z} q^i}{1-\frac{w}{z}q^i} \Gamma_+(w) \Gamma_-(z)$$
      (Note that the product converges if $|q|<1$).
\end{enumerate}
\end{Proposition}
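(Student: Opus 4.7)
The plan is to dispatch parts (a) and (b) essentially by inspection and to reduce part (c) to a scalar commutator calculation, which is the standard computation underlying vertex-operator commutation relations.

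For part (a), the empty partition gives $M_{\emptyset}=1$, and by definition $\Gamma_{-}(z)=\exp\bigl(\sum_{n\geq 1} z^{-n}\,\partial/\partial p_n\bigr)$ is built entirely from derivatives in the $p_n$; since every such derivative annihilates constants, only the identity term of the exponential series survives, giving $\Gamma_{-}(z)\cdot 1=1$. For part (b), equip $\mathcal{F}$ with the grading $\deg p_n=n$, so that $z^{L}$ is precisely the corresponding degree operator (because $|\mu|$ is the degree of $M_\mu$). Then conjugation by $z^{L}$ acts by $p_n\mapsto z^n p_n$ and $\partial/\partial p_n \mapsto z^{-n}\,\partial/\partial p_n$. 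Substituting into the defining exponentials replaces $w^n$ by $(wz)^n$ in the exponent of $\Gamma_{+}$, and $w^{-n}$ by $(wz)^{-n}$ in the exponent of $\Gamma_{-}$, which yields both identities in (b).

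The content is in part (c). Write $\Gamma_{-}(z)=e^{A}$ and $\Gamma_{+}(w)=e^{B}$ with
\[
A=\sum_{n\geq 1}\frac{1}{z^n}\frac{\partial}{\partial p_n},\qquad B=\sum_{m\geq 1}\frac{1-\hbar^m}{1-q^m}\frac{w^m}{m}\,p_m.
\]
Using $[\partial/\partial p_n,p_m]=\delta_{nm}$, the commutator collapses to the scalar
\[
[A,B]=\sum_{n\geq 1}\frac{1-\hbar^n}{1-q^n}\,\frac{(w/z)^n}{n}.
\]
Because $[A,B]$ is central, the Baker--Campbell--Hausdorff identity specialises to $e^{A}e^{B}=e^{[A,B]}\,e^{B}e^{A}$, so $\Gamma_{-}(z)\Gamma_{+}(w)=e^{[A,B]}\,\Gamma_{+}(w)\Gamma_{-}(z)$.

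It then remains to identify $e^{[A,B]}$ with the claimed infinite product. Expanding $(1-q^n)^{-1}=\sum_{i\geq 0}q^{ni}$, interchanging the two sums, and invoking $-\log(1-x)=\sum_{n\geq 1}x^n/n$, one computes
\[
[A,B]=\sum_{i\geq 0}\sum_{n\geq 1}\frac{\bigl((w/z)q^i\bigr)^n-\bigl(\hbar(w/z)q^i\bigr)^n}{n}=\sum_{i\geq 0}\log\frac{1-\hbar(w/z)q^i}{1-(w/z)q^i},
\]
and exponentiating gives the stated product. The only delicate point is the interchange of summations in this final manipulation, which is legitimate as a formal power series identity in $w/z$ and $q$, and convergent analytically for $|q|<1$ with $|w/z|$ small; otherwise the whole argument is pure bookkeeping.
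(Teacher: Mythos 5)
Your proof is correct and follows essentially the same route as the paper, whose proof is a one-line deferral to the definitions and the Pieri formulas; your BCH computation for part (c) is exactly the standard calculation that "follows from the definitions of $\Gamma_{\pm}(z)$" is shorthand for, and the scalar commutator $[A,B]=\sum_{n\ge 1}\frac{1-\hbar^n}{1-q^n}\frac{(w/z)^n}{n}$ correctly reflects the absence of the $1/n$ factor in $\Gamma_-$. The only cosmetic difference is in parts (a) and (b), where the paper cites the Pieri expansions (\ref{pieriplus})--(\ref{pieriminus}) while you argue directly from the exponential definitions and the identification of $z^L$ with the grading operator (valid since each $M_\mu$ is homogeneous of degree $|\mu|$); both are immediate.
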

\begin{proof}
Parts \ref{pt1} and \ref{pt2} follow from (\ref{pieriplus}) and (\ref{pieriminus}), along with the definition of $z^L$. Part \ref{pt3} follows from the definitions of $\Gamma_{\pm}(z)$.
\end{proof}

\subsection{Proof of Theorem \ref{mthm}, Step 1}
Fix a partition $\lambda$. In this subsection, we identify the product in (\ref{prform}) as a certain matrix element of an operator on the completion of $\mathcal{F}$.
Let 
\begin{align*}
  \Lambda_i:= \widehat{z}_{i}^L \Gamma_{\tau_{\lambda}(i)}(1) 
\end{align*}
where $\tau_{\lambda}(i)\in \{+,-\}$ is defined by
\begin{align*}
    \tau_{\lambda}(i)&:=
    \begin{cases}
    + &  \text{if} \ \ i \geq 0  \ \ \text{and} \ \ \textsf{v}_i-\textsf{v}_{i+1}=1 \\ + & \text{if} \ \ i<0 \ \ \text{and} \ \ \textsf{v}_i-\textsf{v}_{i+1}=0 \\
    - & \text{otherwise}
    \end{cases} 
\end{align*}
where $\textsf{v}$ is as in Section 1. By definition, $\widehat{z}_i^L$ is the operator that acts as $\widehat{z}_i^L\cdot M_{\mu}= \left(\widehat{z}_i\right)^{|\mu|} M_{\mu}$ where $\widehat{z}_i$ is as in Section 1.4. It follows that $\Lambda_i$ is an operator on the completion of $\mathcal{F}$.

The function $\tau_\lambda$ can be thought of as keeping track of the slopes along the top of the partition, see Figure \ref{yng2}.

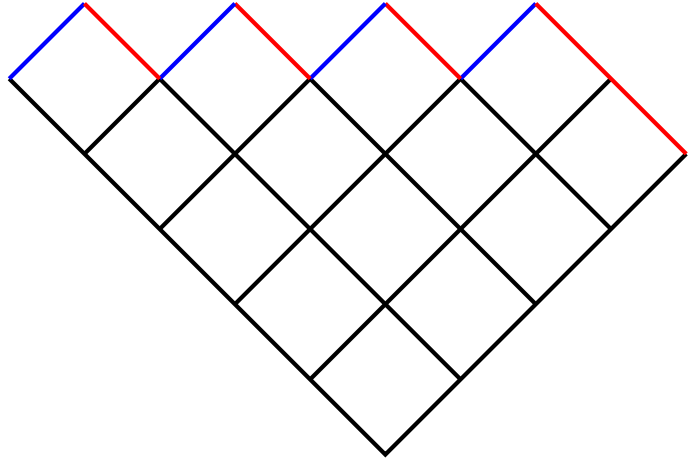
\begin{figure}[ht]
\centering
\begin{tikzpicture}[roundnode/.style={circle, draw=black, very thick, minimum size=5mm},squarednode/.style={rectangle, draw=black, very thick, minimum size=5mm}] 
\draw[ultra thick] (-5,5)--(0,0) -- (4,4);
\draw[ultra thick] (-3,5)--(1,1);
\draw[ultra thick,blue] (-5,5)--(-4,6);
\draw[ultra thick] (-1,5)--(2,2);
\draw[ultra thick] (-4,4)--(-3,5);
\draw[ultra thick]  (1,5)--(3,3);
\draw[ultra thick] (-1,1)--(3,5);
\draw[ultra thick] (-2,2)--(1,5);
\draw[ultra thick,blue] (1,5)--(2,6);
\draw[ultra thick] (-3,3)--(-1,5);
\draw[ultra thick, red] (4,4)--(2,6);
\draw[ultra thick,red] (1,5)--(0,6);
\draw[ultra thick,blue] (-1,5)--(0,6);
\draw[ultra thick,red] (-1,5)--(-2,6);
\draw[ultra thick,blue] (-3,5)--(-2,6);
\draw[ultra thick,red] (-3,5)--(-4,6);
\end{tikzpicture}
\caption{The upper boundary of $\lambda$ is a piecewise linear function. The slope of the segment from column $i$ to column $i+1$ is $-\tau_{\lambda}(i)$. In this case, $\lambda=(5,4,3,2)$ and $\textsf{v}=(\textsf{v}_i)=(\ldots,0,0,1,1,2,2,3,2,2,1,0,0,\ldots)$. Then $-\tau_{\lambda}(i)$ is $+$ if the boundary segment to the right of column $i$ is blue, and is $-$ otherwise.} \label{yng2}
\end{figure}

For definiteness, let $-r$ and $s$ be the indices of the leftmost and rightmost nonzero entries of $\textsf{v}$. We have the following:
\begin{Proposition} 
\begin{equation*}
     \left\langle M_{\emptyset} \middle| \Gamma_{-}(1) \Lambda_{-r} \Lambda_{-r+1} \ldots \Lambda_{s}  \middle| M_{\emptyset} \right\rangle = \prod_{\square \in \lambda} \prod_{i=0}^\infty \frac{1-\hbar z_{\square} q^i}{1-z_{\square} q^i}
\end{equation*}
\end{Proposition}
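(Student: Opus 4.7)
The plan is to reduce the matrix element to a product of commutation factors via repeated application of Proposition \ref{commutation}, and then to identify that product with the claimed one through a bijection with boxes of $\lambda$. First, using part (b), I would push each grading operator $\widehat{z}_i^L$ to the right past all the $\Gamma_{\pm}(1)$ factors. Each crossing of $\widehat{z}_i^L$ through $\Gamma_{\pm}(1)$ rescales the latter's argument, and after combining the resulting $\widehat{z}^L$-factors one obtains
\begin{equation*}
\Lambda_{-r} \Lambda_{-r+1} \cdots \Lambda_s = \Gamma_{\tau_\lambda(-r)}(Z_{-r}) \, \Gamma_{\tau_\lambda(-r+1)}(Z_{-r+1}) \cdots \Gamma_{\tau_\lambda(s)}(Z_s) \cdot Z_s^L,
\end{equation*}
where $Z_i := \widehat{z}_{-r} \widehat{z}_{-r+1} \cdots \widehat{z}_i$. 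Since $M_\emptyset$ has weight zero, $Z_s^L$ acts on it as the identity and may be dropped.

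Second, I would apply part (c) repeatedly to commute every $\Gamma_-$ past every $\Gamma_+$ to its right, treating the leading $\Gamma_-(1)$ as a $\Gamma_-$-operator at position $-r-1$ with argument $Z_{-r-1}:=1$. Each commutation yields a scalar factor $\prod_{k\geq 0} (1-\hbar w/z\, q^k)/(1-w/z\, q^k)$, where $z$ and $w$ are the respective $\Gamma_-$ and $\Gamma_+$ arguments. Once all $\Gamma_+$'s have been brought to the left of all $\Gamma_-$'s, the trailing $\Gamma_-$'s fix $M_\emptyset$ by part (a), and the leading $\Gamma_+$'s fix $\langle M_\emptyset|$ because their adjoints $\Gamma_-(1/w)$ fix $M_\emptyset$ by part (a). The matrix element therefore equals the product of harvested scalar factors, indexed by pairs $(i,j)$ with $i<j$ such that position $i$ carries a $\Gamma_-$ and position $j$ carries a $\Gamma_+$; the argument ratio attached to such a pair is $Z_j/Z_i = \widehat{z}_{i+1} \cdots \widehat{z}_j$.

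Third, I would match these factors with the target product by an explicit bijection between inversion-pairs and boxes of $\lambda$. A direct check of the definition of $\tau_\lambda$ shows that the $\Gamma_-$-positions $\{-r-1\} \cup \{i:\tau_\lambda(i)=-\}$ equal the set $\{(k-\lambda_k)-1 : k=1,\ldots,l(\lambda)\}$ of row-end contents shifted by $-1$, while the $\Gamma_+$-positions $\{j:\tau_\lambda(j)=+\}$ equal the set $\{\lambda'_m-m : m=1,\ldots,l(\lambda')\}$ of column-bottom contents. The pair $(i,j)=((k-\lambda_k)-1,\, \lambda'_m-m)$ therefore corresponds to the box $(k,m)$; the condition $i<j$ becomes $k+m \leq \lambda_k + \lambda'_m$, which is equivalent to $(k,m)\in \lambda$. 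The ratio $Z_j/Z_i$ telescopes to $\widehat{z}_{k-\lambda_k}\widehat{z}_{k-\lambda_k+1}\cdots \widehat{z}_{\lambda'_m-m}$, which equals $z_{(k,m)}$ since the hook contents of $(k,m)$ form exactly the consecutive range $\{k-\lambda_k,\ldots,\lambda'_m-m\}$.

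The main obstacle is the combinatorial identification in the third step: one has to verify that the leading $\Gamma_-(1)$ supplies exactly the "row $1$" position $-r-1=-\lambda_1$ (which is not produced by $\tau_\lambda$ at any index in its range $[-r,s]$), and check that the two cases in the definition of $\tau_\lambda$ at $i=0$ correctly glue the left and right halves of the rotated boundary of $\lambda$. The rest of the argument is routine bookkeeping.
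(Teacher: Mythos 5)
Your argument is correct and follows essentially the same route as the paper's own proof: push the grading operators $\widehat{z}_i^{\,L}$ to the right so that $\Gamma_{\tau_\lambda(i)}$ acquires the argument $\widehat{z}_{-r}\cdots\widehat{z}_i$, harvest one scalar factor from each $(\Gamma_-,\Gamma_+)$ inversion pair via the commutation relation, and then observe that the surviving operators fix $M_\emptyset$ on both sides. Your third step is simply a more explicit version of the paper's row/column counting $a=|\{i\ge i_1:\tau(i)=+\}|$, $b=|\{i\le i_0:\tau(i)=-\}|$ (your description of the $\Gamma_\pm$-positions via $k-\lambda_k-1$ and $\lambda'_m-m$, and your treatment of the leading $\Gamma_-(1)$ as the position $-r-1=-\lambda_1$, pin down precisely the bookkeeping the paper leaves implicit).
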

\begin{proof}
We first commute all of the $\widehat{z}_i^L$ operators to the right, using Proposition \ref{commutation}, and use the fact that $\widehat{z}_i^L \cdot M_{\emptyset}= M_{\emptyset}$:
\begin{align*}
      \left\langle M_{\emptyset} \middle| \Gamma_{-}(1) \Lambda_{-r} \Lambda_{-r+1} \ldots \Lambda_{s}  \middle| M_{\emptyset} \right\rangle =   \left\langle M_{\emptyset} \middle| \Gamma_{-}(1) \prod_{i=-r}^s  \Gamma_{\tau_{\lambda}(i)}\left(  \widehat{z}_{-r} \ldots \widehat{z}_i \right) \middle| M_{\emptyset} \right\rangle
\end{align*}
with the product ordered appropriately. Next, we commute each $\Gamma_{-}(w)$ to the right using Proposition \ref{commutation}, starting with the rightmost one. Suppose that $i_0 < i_1$ are such that $\tau_{\lambda}(i_0)=-$ and $\tau_{\lambda}(i_1)=+$. Then commuting $\Gamma_{\tau_{\lambda}(i_0)}(\widehat{z}_{-r} \ldots \widehat{z}_{i_0})$ across $\Gamma_{\tau_{\lambda}(i_1)}(\widehat{z}_{-r} \ldots \widehat{z}_{i_1})$ introduces a factor of 
\begin{align*}
\prod_{i=0}^\infty \frac{1-\hbar  \widehat{z}_{i_0+1} \ldots \widehat{z}_{i_1} q^i}{1- \widehat{z}_{i_0+1} \ldots \widehat{z}_{i_1} q^i} = \prod_{i=0}^\infty \frac{1-\hbar z_{\square} q^i}{1-z_{\square} q^i}
\end{align*}
where $\square$ is the box in row $a$ and column $b$ of $\lambda$ (in the standard coordinates) for
\begin{align*}
    a&=|\{i\geq i_1 \mid \tau(i)=1\}| \\
    b&=|\{i\leq i_0 \mid \tau(i)=-1\}|
\end{align*}
Commuting all such terms, we obtain
\begin{align*}
 & \left\langle M_{\emptyset} \middle| \Gamma_{-}(1) \Lambda_{-r} \Lambda_{-r+1} \ldots \Lambda_{s}  \middle| M_{\emptyset} \right\rangle \\
  &= \prod_{\square \in \lambda} \prod_{i=0}^\infty \frac{1-\hbar z_{\square} q^i}{1-z_{\square} q^i} \left\langle M_{\emptyset} \Bigg|  \prod_{\substack{i=-r \\ \tau_{\lambda}(i)=+}}^s \Gamma_{\tau_{\lambda}(i)} \left(  \widehat{z}_{-r} \ldots \widehat{z}_i  \right)  \Bigg| M_{\emptyset}\right\rangle \\
  &= \prod_{\square \in \lambda} \prod_{i=0}^\infty \frac{1-\hbar z_{\square} q^i}{1-z_{\square} q^i}
\end{align*}
\end{proof}

\subsection{}
\begin{Definition}\label{shape}
We say that an $(r+s+1)$-tuple of partitions $(\lambda^{-r}, \ldots , \lambda^s)$ \textit{interlace according to the shape of a partition} $\lambda$ if
\begin{itemize}
\item $\tau_{\lambda}(i)=+ \implies \lambda^{i} \succ \lambda^{i+1}$ 
\item $\tau_{\lambda}(i)=- \implies \lambda^{i} \prec \lambda^{i+1}$ 
\item $l(\lambda^{i}) \leq \mathsf{v}_i$
\end{itemize} 
We write $S_{\lambda}$ for the set of all $(r+s+1)$-tuples of partitions that interlace according to the shape of $\lambda$.
\end{Definition}
\begin{Proposition} We can rewrite the vertex function (\ref{vf1}) as:
\begin{align}\label{vf2}
   \mathcal{Z}_{\lambda} = \sum_{(\lambda^{-r}, \ldots, \lambda^{s}) \in S_{\lambda}} \left( \alpha_{\lambda^0} \prod_{i=-r}^{s-1} \beta_{\lambda^i,\lambda^{i+1}} \prod_{i=-r}^s \gamma_{\lambda^i} z_i^{|\lambda^i|} \right) 
\end{align}
where 
\begin{align*}
    \alpha_{\lambda^0} &= \prod_{j=1}^{l(\lambda^0)} \frac{(\hbar^{l(\lambda^0)-j+1})_{\lambda^0_{j}}}{(q \hbar^{l(\lambda^0)-j})_{\lambda^0_{j}}} \\
    \beta_{\lambda^i,\lambda^{i+1}}  &= \begin{cases} \prod\limits_{j=1}^{\mathsf{v}_i} \prod\limits_{k=1}^{\mathsf{v}_{i+1}} \frac{(\hbar^{j-k+1-\sigma(i+1)})_{\lambda^{i+1}_k-\lambda^i_j}}{(q\hbar^{j-k-\sigma(i+1)})_{\lambda^{i+1}_k-\lambda^i_j}} & i \geq 0 \\ \prod\limits_{j=1}^{\mathsf{v}_i} \prod\limits_{k=1}^{\mathsf{v}_{i+1}} \frac{(\hbar^{j-k-\sigma(i+1)})_{\lambda^{i+1}_k-\lambda^i_j}}{(q\hbar^{j-k-1-\sigma(i+1)})_{\lambda^{i+1}_k-\lambda^i_j}} & i < 0 \end{cases} \\
    \gamma_{\lambda^i} &= \prod_{j,k=1}^{\mathsf{v}_i} \frac{(q \hbar^{j-k})_{\lambda^i_k-\lambda^i_j}}{( \hbar^{j-k+1})_{\lambda^i_k-\lambda^i_j}}
\end{align*}
and $\sigma(i):=\textsf{v}_{i-1}-\textsf{v}_i$.
\end{Proposition}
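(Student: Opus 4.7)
The plan is to prove the identity $(\ref{vf1})=(\ref{vf2})$ by a direct change of summation variables sending a tuple of nonnegative integers $(d_{i,j})$ to a tuple of partitions $(\lambda^i)_{i=-r}^s$. I would use the identification $\lambda^i_j := d_{i,\mathsf{v}_i-j+1}$, reversing the $j$-index within each vertex so that the partition order $\lambda^i_1\geq\lambda^i_2\geq\ldots$ corresponds to the weakly increasing order on the original $d$-variables.

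Under this identification, a direct computation shows that each of the four groups of Pochhammer factors in $(\ref{vf1})$ matches the corresponding factor in $(\ref{vf2})$: the first factor $\prod_{j}(\hbar^j)_{d_{0,j}}/(q\hbar^{j-1})_{d_{0,j}}$ matches $\alpha_{\lambda^0}$ after the substitution $j\mapsto\mathsf{v}_0-j+1$, with trivial contributions absorbed when $l(\lambda^0)<\mathsf{v}_0$; the last ``diagonal'' factor matches $\gamma_{\lambda^i}$ since the exponents $k-j$ in $(\ref{vf1})$ become $j'-k'$ in $\gamma_{\lambda^i}$ under the reversal; and the two middle factors (for $i<0$ and for $i\geq 0$) match $\beta_{\lambda^i,\lambda^{i+1}}$, where the shift $\sigma(i+1)=\mathsf{v}_i-\mathsf{v}_{i+1}$ in the Pochhammer exponents arises from the mismatch between the index ranges of sizes $\mathsf{v}_i$ and $\mathsf{v}_{i+1}$ under the reversal. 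The monomial $\zz^\dd$ matches $\prod_i z_i^{|\lambda^i|}$ since $d_i=\sum_j d_{i,j}=|\lambda^i|$.

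Second, I would show that the sum in $(\ref{vf1})$ is effectively supported on tuples in $S_\lambda$. The key observation is the specialized Pochhammer value $(1)_n=0$ for $n\geq 1$: the factor $\beta_{\lambda^i,\lambda^{i+1}}$ contains, in its numerator, a subfactor of the form $(1)_{\lambda^{i+1}_k-\lambda^i_j}$ at precisely the index pair where $\hbar^{j-k+1-\sigma(i+1)}=1$; this vanishes exactly at the configurations where the interlacing condition dictated by $\tau_\lambda(i)$ fails. Combined with the analogous $(1)_n$-type subfactor in $\gamma_{\lambda^i}$ that governs the intra-vertex partition order, this restricts the effective summation to tuples in $S_\lambda$.

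The hard part will be carefully tracking which Pochhammer factor produces the vanishing at which configurations, particularly at the corners of $\lambda$ where $\mathsf{v}_i$ changes value and the sign of $\sigma(i+1)$ jumps. The definitions of $\tau_\lambda$ and $\sigma$ are precisely designed so that the vanishing pattern in $\beta$ and $\gamma$ matches the interlacing conditions of $S_\lambda$. Verifying this correspondence at each transition point, and confirming that only $(\lambda^{-r},\ldots,\lambda^s)\in S_\lambda$ contribute, is the technical core of the proof; once established, the factor-by-factor matching from the first step completes the argument.
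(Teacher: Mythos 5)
Your proposal follows essentially the same route as the paper's proof: the same index reversal $\lambda^i_j = d_{i,\mathsf{v}_i-j+1}$, the same factor-by-factor matching in which the $\sigma(i+1)$ shifts arise from the mismatch of index ranges, and the same vanishing mechanism ($(1)_n=0$ for $n\geq 1$ and $1/(q)_n=0$ for $n<0$) cutting the support down to $S_\lambda$. One small correction: the intra-column ordering is not enforced by $\gamma_{\lambda^i}$, whose $(1)_n$-type subfactor sits in the \emph{denominator} and therefore produces a pole rather than a zero; as in the paper, that ordering is instead a consequence of chaining the interlacing conditions already imposed by the $\beta$-factors.
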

\vspace{-.43cm}
\begin{proof}
Observe that
\begin{equation*}
    i<0 \ \ \text{and} \ \ k=j \implies (\hbar^{k-j})_{d_{i+1,k}-d_{i,j}} = (1)_{d_{i+1,k}-d_{i,j}}
\end{equation*}
which is nonzero only if $d_{i+1,k}-d_{i,j}\leq 0$. Similarly,
\begin{equation*}
i<0 \ \ \text{and} \ \ k=j+1 \implies \frac{1}{(q\hbar^{k-j-1})_{d_{i+1,k}-d_{i,j}}} = \frac{1}{(q)_{d_{i+1,k}-d_{i,j}}}
\end{equation*}
which is nonzero only if $d_{i+1,k}-d_{i,j}\geq 0$. An examination of the terms in (\ref{vf1}) gives a similar condition for $d_{i,j}$ where $i\geq 0$. This condition has the following interpretation: starting at the corner box of $\lambda$, the indices $d_{i,j}$ of a nonzero term in (\ref{vf1}) must be weakly increasing whenever we move up and to the left or up and to the right.

In particular, we have $d_{i,1}\leq d_{i,2} \leq \ldots d_{i,p_i}$ for all $i$. Letting 
\begin{equation*}
    \lambda^{i}=(d_{i,\mathsf{v}_i}, \ldots , d_{i,1})
\end{equation*}
the comments in the preceding paragraph show that the sum for the vertex function is indexed $S_\lambda$. Re-indexing the vertex function in this way gives (\ref{vf2}).
\end{proof}

\subsection{}
\begin{Lemma} For any fixed $(\lambda^{-r}, \ldots, \lambda^s)\in S_{\lambda}$, we have
\begin{align*}
    \gamma_{\lambda^i} &= \prod_{1 \leq k < j \leq \mathsf{v}_i}\frac{(q \hbar^{j-k})_{\lambda^i_k- \lambda^{i}_j}}{( \hbar^{j-k+1})_{\lambda^i_k- \lambda^{i}_j}} \prod_{1 \leq j < k \leq \mathsf{v}_i} \left( \frac{\hbar}{q}\right)^{\lambda^i_j-\lambda^i_k}\frac{(q \hbar^{k-j-1})_{\lambda^i_j- \lambda^{i}_k}}{( \hbar^{k-j})_{\lambda^i_j- \lambda^{i}_k}} \\
    \end{align*}
    If $i\geq 0$, then
    \begin{align*}
    \beta_{\lambda^i,\lambda^{i+1}} &= \prod_{j=1}^{\mathsf{v}_i} \prod_{\substack{k=1 \\ k <_{-\tau(i)} j}}^{\mathsf{v}_{i+1}}  \frac{(\hbar^{j-k+1-\sigma(i+1)})_{\lambda^{i+1}_k-\lambda^i_j}}{(q\hbar^{j-k-\sigma(i+1)})_{\lambda^{i+1}_k-\lambda^i_j}}
    \prod_{j=1}^{\mathsf{v}_i}    \prod_{\substack{k=1 \\ j <_{\tau(i)} k }}^{\mathsf{v}_{i+1}} \left(\frac{q}{\hbar} \right)^{\lambda^{i}_j-\lambda^{i+1}_k} \frac{(\hbar^{k-j+\sigma(i+1)})_{\lambda^{i}_j-\lambda^{i+1}_k}}{(q \hbar^{k-j-1+\sigma(i+1)})_{\lambda^{i}_j-\lambda^{i+1}_k}} 
    \end{align*}
   where  $<_{-}=<$ and $<_{+}=\leq$ and we have abbreviated $\tau$ for $\tau_{\lambda}$. A similar formula holds for $i<0$, but with a shift by $\hbar$ or $\hbar^{-1}$ in the appropriate Pochhammer symbols.
   \end{Lemma}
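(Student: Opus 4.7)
The plan is to rewrite every Pochhammer symbol appearing in $\gamma_{\lambda^i}$ and $\beta_{\lambda^i,\lambda^{i+1}}$ so that its index is nonnegative, by splitting each product into regions according to the sign of that index and using the elementary identity
$$
\frac{(\hbar x)_{-n}}{(qx)_{-n}} \;=\; \left(\frac{q}{\hbar}\right)^{n} \frac{(1/x)_{n}}{((q/\hbar)/x)_{n}},
$$
which is a direct consequence of $(x)_d=\varphi(x)/\varphi(xq^d)$. Its reciprocal form $(qx)_{-n}/(\hbar x)_{-n} = (\hbar/q)^{n}(q\hbar^{-1}/x)_{n}/(\hbar/x)_{n}$ is exactly what is needed to convert a factor with nonpositive Pochhammer index into a "flipped" factor with nonnegative index, at the cost of a monomial prefactor $(\hbar/q)^{n}$ or $(q/\hbar)^{n}$.

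For $\gamma_{\lambda^i}$ I would split the square product $\prod_{j,k=1}^{\mathsf{v}_i}$ into the three regions $j=k$, $k<j$, and $j<k$. The diagonal terms give $(q)_0/(\hbar)_0=1$. On $k<j$ the partition ordering forces $\lambda^i_k-\lambda^i_j\geq 0$, so these factors are kept as written. On $j<k$ the index $\lambda^i_k-\lambda^i_j\leq 0$ is nonpositive; applying the identity with $x=\hbar^{j-k}$ and $n=\lambda^i_j-\lambda^i_k$ yields exactly the claimed factor together with the prefactor $(\hbar/q)^{\lambda^i_j-\lambda^i_k}$.

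For $\beta_{\lambda^i,\lambda^{i+1}}$ with $i\geq 0$, the sign of $\lambda^{i+1}_k-\lambda^i_j$ is controlled entirely by the interlacing dictated by $\tau_\lambda(i)$. When $\tau(i)=+$, the condition $\lambda^i\succ\lambda^{i+1}$ implies $\lambda^{i+1}_k-\lambda^i_j\geq 0$ iff $k<j$ and $\leq 0$ iff $j\leq k$; when $\tau(i)=-$, the reverse interlacing $\lambda^i\prec\lambda^{i+1}$ gives the dual split $k\leq j$ versus $j<k$. These two cases match uniformly the inequalities $<_{-\tau(i)}$ and $<_{\tau(i)}$ in the statement. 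Applying the identity on the second region with $x=\hbar^{j-k-\sigma(i+1)}$ (so $\hbar x$ and $qx$ coincide with the ambient numerator and denominator) produces precisely the stated factor of $(q/\hbar)^{\lambda^i_j-\lambda^{i+1}_k}$ and the claimed flipped Pochhammer ratio. The case $i<0$ is formally identical; only the exponent of $\hbar$ inside the Pochhammer symbols is shifted, reflecting the shift by one between the $i\geq 0$ and $i<0$ branches of $\beta$.

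The main obstacle I anticipate is purely bookkeeping: one must verify that the strict-vs-weak inequalities $<_{\pm}$ partition the square $\{1,\dots,\mathsf{v}_i\}\times\{1,\dots,\mathsf{v}_{i+1}\}$ in exactly the way forced by the interlacing, and in particular that the boundary cases where $\lambda^{i+1}_k=\lambda^i_j$ can be assigned to either region without changing the product (because the corresponding Pochhammer index is zero and the factor is trivially $1$). Once that combinatorial check is done, the rest of the argument is a routine substitution of the identity factor by factor.
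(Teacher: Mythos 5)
Your proposal is correct and is essentially the paper's own proof: the paper's argument consists of exactly the quoted Pochhammer-flip identity, applied factor by factor after splitting each product according to the sign of the index as dictated by the interlacing, which is precisely the bookkeeping you spell out. (One small typo: the denominator in your displayed reciprocal form should be $(1/x)_n$ rather than $(\hbar/x)_n$; your subsequent applications nevertheless land on the correct factors.)
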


   \begin{proof}
   This follows from the identity
   \begin{equation*}
    \frac{(\hbar x)_{-n}}{(qx)_{-n}}=\left(\frac{q}{\hbar}\right)^n \frac{(\frac{1}{x})_n}{(\frac{q}{\hbar}\frac{1}{x})_n}
\end{equation*}
   \end{proof}
   

For convenience, we further introduce
\begin{align*}
    \delta_{\lambda^i} = \prod_{1 \leq k < j \leq \mathsf{v}_i}\frac{(q \hbar^{j-k})_{\lambda^i_k- \lambda^{i}_j}}{( \hbar^{j-k+1})_{\lambda^i_k- \lambda^{i}_j}} \ \ \ \text{and} \ \ \ \epsilon_{\lambda^i} =  \prod_{1 \leq j \leq k \leq \mathsf{v}_i} \frac{(q \hbar^{k-j-1})_{\lambda^i_j- \lambda^{i}_k}}{( \hbar^{k-j})_{\lambda^i_j- \lambda^{i}_k}}
\end{align*}
so that
\begin{equation*}
    \gamma_{\lambda^i} = \delta_{\lambda^i} \epsilon_{\lambda^i} \prod_{1 \leq j \leq k \leq \mathsf{v}_i} \left( \frac{\hbar}{q}\right)^{\lambda^i_j-\lambda^i_k}
\end{equation*}

\subsection{Proof of Theorem \ref{mthm}, Step 2}

We are now ready to finish the proof of Theorem \ref{mthm} by examining step by step the result of applying each $\Lambda_i$ to $M_{\emptyset}$. Formulas (\ref{pieriplus}) and (\ref{pieriminus}) show that terms of 
\begin{equation*}
 \Gamma_{-}(1) \Lambda_{-r} \Lambda_{-r+1} \ldots \Lambda_{s}   M_{\emptyset} 
\end{equation*}
are indexed by $(r+s+1)$-tuples of partitions that satisfy the first two conditions of Definition \ref{shape}, but a-priori without satisfying the length requirement. However, taking the $M_{\emptyset}$-coefficient of $\Gamma_{-}(1) \Lambda_{-r} \Lambda_{-r+1} \ldots \Lambda_{s}  M_{\emptyset}$ ensures that the only terms that contribute actually arise from elements of $S_{\lambda}$, a fact that we assume in the remaining computations below.

Now, let $(\lambda^{-r}, \ldots , \lambda^{s}) \in S_{\lambda}$. Let us also fix $i \geq 0$ so that $\tau(i)=+$, where as before we write $\tau$ for $\tau_{\lambda}$.
Then by (\ref{pieriplus}), we have
\begin{align*}
   M_{\lambda^i}  \ \text{coefficient of} \ \Lambda_i\left( M_{\lambda^{i+1}} \right) = \widehat{z}_i^{|\lambda_i|}  \prod_{1\leq j \leq k \leq \mathsf{v}_i} \left( \frac{(\hbar^{k-j+1})_{\lambda^i_j-\lambda^{i+1}_k}}{(q\hbar^{k-j})_{\lambda^i_j-\lambda^{i+1}_k}} \frac{(q\hbar^{k-j})_{\lambda^i_j-\lambda^i_k}}{(\hbar^{k-j+1})_{\lambda^i_j-\lambda^i_k}} \right)  \\
   \prod_{1\leq k<j \leq \mathsf{v}_i} \left( \frac{(\hbar^{j-k})_{\lambda^{i+1}_k-\lambda^i_j}}{(q\hbar^{j-k-1})_{\lambda^{i+1}_k-\lambda^i_j}} \frac{(q \hbar^{j-k-1})_{\lambda^{i+1}_k-\lambda^{i+1}_j}}{(\hbar^{j-k})_{\lambda^{i+1}_k-\lambda^{i+1}_j}}  \right) 
\end{align*}
where we adopt the convention that $\lambda^{i+1}_j=0$ for $j>\mathsf{v}_{i+1}$. Separating such terms, we have
\begin{align*}
   M_{\lambda^i}  \ \text{coefficient of} \ \Lambda_i\left( M_{\lambda^{i+1}} \right)  = \widehat{z}_i^{|\lambda_i|}  \prod_{j=1}^{\mathsf{v}_i} \frac{(\hbar^{\mathsf{v}_i-j+1})_{\lambda^i_j}}{(q\hbar^{\mathsf{v}_i-j})_{\lambda^i_j}} \prod_{k=1}^{\mathsf{v}_{i+1}}  \frac{(q \hbar^{\mathsf{v}_i-k-1})_{\lambda^{i+1}_k}}{(\hbar^{\mathsf{v}_i-k})_{\lambda^{i+1}_k}} \\
   \prod_{j=1}^{\mathsf{v}_i} \prod_{\substack{k=1 \\ j \leq k}}^{\mathsf{v}_{i+1}} \frac{(\hbar^{k-j+1})_{\lambda^i_j-\lambda^{i+1}_k}}{(q\hbar^{k-j})_{\lambda^i_j-\lambda^{i+1}_k}} 
     \prod_{j=1}^{\mathsf{v}_i} \prod_{\substack{k=1 \\ k<j}}^{\mathsf{v}_{i+1}} \frac{(\hbar^{j-k})_{\lambda^{i+1}_k-\lambda^i_j}}{(q\hbar^{j-k-1})_{\lambda^{i+1}_k-\lambda^i_j}}  \prod_{1\leq j \leq k \leq \mathsf{v}_i}  \frac{(q\hbar^{k-j})_{\lambda^i_j-\lambda^i_k}}{(\hbar^{k-j+1})_{\lambda^i_j-\lambda^i_k}}
     \\
     \prod_{1\leq k<j \leq \mathsf{v}_{i+1}} \frac{(q \hbar^{j-k-1})_{\lambda^{i+1}_k-\lambda^{i+1}_j}}{(\hbar^{j-k})_{\lambda^{i+1}_k-\lambda^{i+1}_j}}   
\end{align*}
We also have
\begin{align*}
    \beta_{\lambda^i, \lambda^{i+1}} = \prod_{j=1}^{\mathsf{v}_i} \prod_{\substack{k=1 \\ k < j}}^{\mathsf{v}_{i+1}}  \frac{(\hbar^{j-k})_{\lambda^{i+1}_k-\lambda^i_j}}{(q\hbar^{j-k-1})_{\lambda^{i+1}_k-\lambda^i_j}}
    \prod_{j=1}^{\mathsf{v}_i}    \prod_{\substack{k=1 \\ j \leq k }}^{\mathsf{v}_{i+1}} \left(\frac{q}{\hbar} \right)^{\lambda^{i}_j-\lambda^{i+1}_k} \frac{(\hbar^{k-j+1})_{\lambda^{i}_j-\lambda^{i+1}_k}}{(q \hbar^{k-j})_{\lambda^{i}_j-\lambda^{i+1}_k}}
\end{align*}
since $\sigma(i+1)=1$. So we have that
\begin{align*}
      M_{\lambda^i}  \ \text{coefficient of} \ \Lambda_i\left( M_{\lambda^{i+1}} \right)  = \widehat{z}_i^{|\lambda_i|}  \left( \beta_{\lambda^i, \lambda^{i+1}} \right) \left( \delta_{\lambda^i} \right) \left( \epsilon_{\lambda^{i+1}}  \right) \prod_{j=1}^{\mathsf{v}_i} \frac{(\hbar^{\mathsf{v}_i-j+1})_{\lambda^i_j}}{(q\hbar^{\mathsf{v}_i-j})_{\lambda^i_j}} \\
      \prod_{k=1}^{\mathsf{v}_{i+1}}  \frac{(q \hbar^{\mathsf{v}_i-k-1})_{\lambda^{i+1}_k}}{(\hbar^{\mathsf{v}_i-k})_{\lambda^{i+1}_k}}     \prod_{j=1}^{\mathsf{v}_i}  \prod_{\substack{k=1 \\ j <_{\tau(i)} k }}^{\mathsf{v}_{i+1}} \left(\frac{\hbar}{q} \right)^{\lambda^{i}_j-\lambda^{i+1}_k}
\end{align*}
An identical calculation using (\ref{pieriminus}) shows that the same formula also holds if $\tau(i)=-$. So applying up to $\Gamma_{\tau(0)}$, the first two products in the previous expression will all cancel except for the $p_0$ term, leaving
\begin{align*}
 &  \alpha_{\lambda^0} \left( \prod_{i=0}^{s-1} \beta_{\lambda^{i}, \lambda^{i+1}} \epsilon_{\lambda^{i+1}} \prod_{j=1}^{\mathsf{v}_i}  \prod_{\substack{k=1 \\ j <_{\tau(i)} k }}^{\mathsf{v}_{i+1}} \left(\frac{\hbar}{q} \right)^{\lambda^{i}_j-\lambda^{i+1}_k} \right) \left( \prod_{i=0}^s \delta_{\lambda^i} \widehat{z}_i^{|\lambda_i|} \right)   \\
 =&     \alpha_{\lambda^0} \delta_{\lambda^0} \left( \prod_{i=0}^{s-1} \beta_{\lambda^{i}, \lambda^{i+1}}   \prod_{j=1}^{\mathsf{v}_i}  \prod_{\substack{k=1 \\ j <_{\tau(i)} k }}^{\mathsf{v}_{i+1}} \left(\frac{\hbar}{q} \right)^{\lambda^{i}_j-\lambda^{i+1}_k} \right)  \left( \prod_{i=1}^s \gamma_{\lambda^i}
 \prod_{1 \leq j < k \leq \mathsf{v}_i} \left( \frac{\hbar}{q}\right)^{\lambda^i_k-\lambda^i_j} \right)    \prod_{i=0}^s  \widehat{z}_i^{|\lambda_i|}
\end{align*}

Now, suppose that $i<0$. We will perform a similar calculation here. Again, suppose that $\tau(i)=+$. In this case, $\mathsf{v}_i=\mathsf{v}_{i+1}$ and we have
\begin{align*}
      M_{\lambda^i}  \ \text{coefficient of} \ \Lambda_i\left( M_{\lambda^{i+1}} \right)  = \widehat{z}_i^{|\lambda^i|} \prod_{1\leq j \leq k \leq \mathsf{v}_i} \left( \frac{(\hbar^{k-j+1})_{\lambda^i_j-\lambda^{i+1}_k}}{(q\hbar^{k-j})_{\lambda^i_j-\lambda^{i+1}_k}} \frac{(q\hbar^{k-j})_{\lambda^i_j-\lambda^i_k}}{(\hbar^{k-j+1})_{\lambda^i_j-\lambda^i_k}} \right) \\
     \prod_{1\leq k<j \leq \mathsf{v}_i} \left( \frac{(\hbar^{j-k})_{\lambda^{i+1}_k-\lambda^i_j}}{(q\hbar^{j-k-1})_{\lambda^{i+1}_k-\lambda^i_j}} \frac{(q \hbar^{j-k-1})_{\lambda^{i+1}_k-\lambda^{i+1}_j}}{(\hbar^{j-k})_{\lambda^{i+1}_k-\lambda^{i+1}_j}} \right)
\end{align*}
We also have
\begin{align*}
    \beta_{\lambda^i,\lambda^{i+1}} = \prod_{1\leq k < j \leq \mathsf{v}_i} \frac{(\hbar^{j-k})_{\lambda^{i+1}_k-\lambda^i_j}}{(q\hbar^{j-k-1})_{\lambda^{i+1}_k-\lambda^i_j}}
   \prod_{1\leq j \leq k \leq \mathsf{v}_i} \left(\frac{q}{\hbar} \right)^{\lambda^{i}_j-\lambda^{i+1}_k} \frac{(\hbar^{k-j+1})_{\lambda^{i}_j-\lambda^{i+1}_k}}{(q \hbar^{k-j})_{\lambda^{i}_j-\lambda^{i+1}_k}}
\end{align*}
since $\sigma(i+1)=0$.
So we have that 
\begin{align*}
      M_{\lambda^i}  \ \text{coefficient of} \ \Lambda_i\left( M_{\lambda^{i+1}} \right)  =  \widehat{z}_i^{|\lambda^i|}  \left( \beta_{\lambda^i,\lambda^{i+1}}\right) \left( \delta_{\lambda^i} \right) \left( \epsilon_{\lambda^{i+1}}  \right) \prod_{1\leq j \leq k \leq \mathsf{v}_i} \left(\frac{\hbar}{q} \right)^{\lambda^{i}_j-\lambda^{i+1}_k} 
\end{align*}
A similar calculation applies for $i<0$ and $\tau(i)=-$. Applying the rest of the operators, we find that the term of  $ \left\langle M_{\emptyset} \middle| \Gamma_{-}(1) \Lambda_{-r} \Lambda_{-r+1} \ldots \Lambda_{s}  \middle| M_{\emptyset} \right\rangle$ corresponding to $(\lambda^{-r}, \ldots, \lambda^{s})$ is 
\begin{align}
      \alpha_{\lambda^0} \left( \prod_{i=-r}^{s-1} \beta_{\lambda^{i}, \lambda^{i+1}} \prod_{j=1}^{\mathsf{v}_i}  \prod_{\substack{k=1 \\ j <_{\tau(i)} k }}^{\mathsf{v}_{i+1}} \left(\frac{\hbar}{q} \right)^{\lambda^{i}_j-\lambda^{i+1}_k} \right) \left( \prod_{i=-r}^s \gamma_{\lambda^i}  \prod_{1 \leq j < k \leq \mathsf{v}_i} \left( \frac{\hbar}{q}\right)^{\lambda^i_k-\lambda^i_j} \right)  \prod_{i=-r}^s \widehat{z}_i^{|\lambda^i|} 
\end{align}
Comparing this with (\ref{vf2}) and recalling the definition of $\widehat{z}_{i}$, we see that $\mathcal{Z}_{\lambda}$ and $ \left\langle M_{\emptyset} \middle| \Gamma_{-}(1) \Lambda_{-r} \Lambda_{-r+1} \ldots \Lambda_{s}  \middle| M_{\emptyset} \right\rangle$ agree term-wise, up to a multiple of $\frac{\hbar}{q}$. So the following lemma completes the proof of Theorem \ref{mthm}.
\begin{Lemma} Let $\lambda$ be a partition and let $(\lambda^{-r}, \ldots, \lambda^{s})\in S_{\lambda}$. Then
\begin{align*}
 \sum_{i=-r}^{s-1} \sum_{j=1}^{\mathsf{v}_i} \sum_{\substack{k=1 \\ j <_{\tau(i)} k }}^{\mathsf{v}_{i+1}} \lambda^i_j - \lambda^{i+1}_k + \sum_{i=-r}^s \sum_{1 \leq j < k \leq \mathsf{v}_i} \lambda^i_k - \lambda^i_j = -\sum_{i=-r}^s  \widehat{\sigma}_{\lambda}(i) |\lambda^i| 
 \end{align*}
\end{Lemma}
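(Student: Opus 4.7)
The plan is a term-by-term comparison: both sides of the identity are $\mathbb{Z}$-linear in the integers $\lambda^i_m$ for $-r\le i\le s$ and $1\le m\le \mathsf{v}_i$, so it suffices to match the coefficient of each variable separately.

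On the right-hand side the coefficient of $\lambda^i_m$ is simply $-\widehat\sigma_\lambda(i)$, which equals $-(\mathsf{v}_{i-1}-\mathsf{v}_i)$ for $i\ne 0$ and $-(\mathsf{v}_{-1}-\mathsf{v}_0)-1$ for $i=0$; in particular it does not depend on $m$. On the left, the variable $\lambda^i_m$ enters in three places: (a) as a $\lambda^{i'}_j$ in the first double sum with $i'=i,\ j=m$, contributing $+\bigl|\{k:m<_{\tau(i)}k\le\mathsf{v}_{i+1}\}\bigr|$; (b) as a $\lambda^{i'+1}_k$ in the first double sum with $i'=i-1,\ k=m$, contributing $-\bigl|\{j:1\le j\le\mathsf{v}_{i-1},\ j<_{\tau(i-1)}m\}\bigr|$; and (c) in the second double sum, with net coefficient $(m-1)-(\mathsf{v}_i-m)=2m-1-\mathsf{v}_i$.

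The definition of $\tau_\lambda$ encodes the precise relation between $\mathsf{v}_i$ and $\mathsf{v}_{i+1}$: for $i\ge 0$, $\tau(i)=+$ forces $\mathsf{v}_{i+1}=\mathsf{v}_i-1$ and $\tau(i)=-$ forces $\mathsf{v}_{i+1}=\mathsf{v}_i$, while for $i<0$ the two possible values of $\mathsf{v}_{i+1}$ are $\mathsf{v}_i$ and $\mathsf{v}_i+1$ respectively. Using the constraint $m\le\mathsf{v}_i$, the counts in (a) and (b) collapse to uniform expressions: (a) equals $\mathsf{v}_i-m$ when $i\ge 0$ and $\mathsf{v}_i+1-m$ when $i<0$, in each instance independent of $\tau(i)$; (b) equals $-m$ when $\tau(i-1)=+$ and $1-m$ when $\tau(i-1)=-$, in each instance independent of the sign of $i-1$. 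Adding (a)+(b)+(c) cancels the $m$-dependence and leaves a value depending only on $\mathrm{sgn}(i)$ and on $\tau(i-1)$; translating $\tau(i-1)$ back into $\mathsf{v}_{i-1}-\mathsf{v}_i$ in each of the four cases then matches $-\widehat\sigma_\lambda(i)$ on the nose.

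Two small items finish the argument. First, the boundary cases $i=-r$ and $i=s$, where (b) or (a) is empty: here $\mathsf{v}_{-r}=\mathsf{v}_s=1$ forces $m=1$, and the missing contribution is indeed zero because $\tau(-r-1)=-$ is automatic and $\tau(s)$ is similarly fixed by $\mathsf{v}_{s+1}=0$. Second, the apparently anomalous $+1$ in $\widehat\sigma_\lambda(0)$ arises naturally: when $i=0$ the quantity (a) lives in the ``$i\ge 0$'' branch while (b) lives in the ``$i-1<0$'' branch, and the mismatch between these two branches produces exactly the extra $+1$. This branch-mismatch at $i=0$ is the only conceptually interesting point; everything else is bookkeeping, and keeping the four sign/position cases straight without sign errors is the main obstacle.
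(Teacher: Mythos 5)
Your proof is correct, but it takes a genuinely different route from the paper's. The paper argues by induction on $|\lambda|$: it removes a removable box of $\lambda$ (WLOG of positive content $i_0$), which deletes the single variable $\lambda^{i_0}_1$ from the tuple, isolates the terms of the left-hand side involving $\lambda^{i_0}_1$, and tracks how $\widehat{\sigma}$ changes from $\lambda$ to $\mu$ (only at $i_0$ and $i_0+1$). You instead exploit the fact that both sides are $\mathbb{Z}$-linear in the entries $\lambda^i_m$ and match coefficients variable by variable, reducing everything to counting the index sets and to the dichotomy $\mathsf{v}_i-\mathsf{v}_{i+1}\in\{0,1\}$ for $i\geq 0$ (resp.\ $\{0,-1\}$ for $i<0$) that underlies the definition of $\tau_{\lambda}$. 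I verified your case analysis: (a) collapses to $\mathsf{v}_i-m$ for $i\geq 0$ and to $\mathsf{v}_i+1-m$ for $i<0$ independently of $\tau(i)$, (b) is $-m$ or $1-m$ according to $\tau(i-1)$, (c) is $2m-1-\mathsf{v}_i$, and the resulting four cases together with the branch mismatch at $i=0$ reproduce $-\widehat{\sigma}_{\lambda}(i)$, including the anomalous $+1$; the boundary terms at $i=-r$ and $i=s$ vanish as you claim since $\mathsf{v}_{-r}=\mathsf{v}_s=1$ forces $m=1$ and the formally extended $\tau(-r-1)$, $\tau(s)$ contribute nothing. Your approach buys a mild strengthening — the identity is seen to hold for arbitrary tuples of numbers, the interlacing conditions defining $S_{\lambda}$ playing no role — and makes the origin of the $+1$ in $\widehat{\sigma}_{\lambda}(0)$ transparent; the paper's induction is shorter to write down but requires the WLOG reduction to a box of positive content and hides the coefficient structure. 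Both are complete proofs.
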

\begin{proof}
We proceed by induction on $|\lambda|$. Call the left hand side $C_{\lambda}$. Without loss of generality, suppose that after removing a box with content $i_0>0$, we are still left with a partition, which we call $\mu$. This box corresponds to $\lambda^{i_0}_{1}$ in the summation for $C_{\lambda}$. Then we have
\begin{itemize}
    \item $\widehat{\sigma}_{\lambda}(i_0) =0 $ 
    \item $\widehat{\sigma}_{\lambda}(i_0+1) =1$  
    \item $\widehat{\sigma}_{\mu}(i_0)=1 $
    \item $\widehat{\sigma}_{\mu}(i_0+1)=0 $
    \item $i\notin \{i_0,i_0+1\} \implies \widehat{\sigma}_{\mu}(i)=\widehat{\sigma}_{\lambda}(i) \ \  \text{and} \ \ |\mu^i|=|\lambda^i|$
\end{itemize}
Separating the terms in $C_{\lambda}$ that depend on $\lambda^{i_0}_{1}$ and using the induction hypothesis, we obtain
\begin{align*}
   C_{\lambda}= C_{\mu}+ \sum_{j=1}^{\mathsf{v}_{i_0}-1} \left( \lambda^{i_0}_1 - \lambda^{i_0+1}_{j} \right) + \sum_{j=1}^{\mathsf{v}_{i_0}} \left( \lambda^{i_0}_j- \lambda^{i_0}_1  \right) \\
   = C_{\mu}-|\lambda^{i_0+1}|+|\lambda^{i_0}| - \lambda^{i_0}_1 \\ 
   =  -\sum_{i=-r}^s \widehat{\sigma}_{\mu}(i) \left(|\lambda^i|-\delta_{i,i_0}\lambda^{i_0}_1\right)  -|\lambda^{i_0+1}|+|\lambda^{i_0}| -\lambda^{i_0}_1 \\
   = -\sum_{\substack{i=-r \\ i\notin \{i_0,i_0+1\}}}^s \widehat{\sigma}_{\lambda} (i) |\lambda^i| -\left(|\lambda^{i_0}|-\lambda^{i_0}_1 \right) -|\lambda^{i_0+1}|+|\lambda^{i_0}| - \lambda^{i_0}_1 \\
   =  -\sum_{\substack{i=-r \\ i\notin \{i_0,i_0+1\}}}^s \widehat{\sigma}_{\lambda} (i) |\lambda^i| -|\lambda^{i_0+1}|\\
   = -\sum_{i=-r}^s \widehat{\sigma}_{\lambda}(i) |\lambda^i|
\end{align*}
as desired.
\end{proof}

\section{Vertex functions of $A_{n}$-Quiver Varieties and 3d Mirror Symmetry}
In the final section, we describe an application of Theorem \ref{mthm} to finite and affine type $A_n$-quiver varieties.

\subsection{}
We consider quivers of type $A_n$, i.e. with vertex set $I=\{0, 1, \ldots, n-1\}$, and arrows $i\to i+1$. Let $\mathsf{v}=(\mathsf{v}_0,\ldots , \mathsf{v}_{n-1})$ and $\mathsf{w}=(\mathsf{w}_0, \ldots, \mathsf{w}_{n-1})$ be the dimension and framing dimension vectors, respectively. This information defines a Nakajima quiver variety $X={\cal{M}}(\textsf{v}, \textsf{w})$ as in Section 2.1. Our choice of stability condition is given by the $\prod_{i=0}^{n-1} GL(\mathsf{v}_i)$-character
$$
\chi:  (g_i) \mapsto \prod_{i=0}^{n-1} \det(g_i)
$$

The torus
$$\bT = \mathbb{C}^\times_\hbar \times \bA = \mathbb{C}^\times_\hbar\times \left( \mathbb{C}^\times\right)^{w_0+\ldots + w_{n-1}}$$
acts on $X$, where $\bA$ preserves the symplectic form and $\mathbb{C}^\times_\hbar$ scales it with character $\hbar$.

A $\bT$-fixed point on $X$ is naturally indexed by the set of $(\mathsf{w}_0+ \ldots + \mathsf{w}_{n-1})$-tuples of $n$-colored partitions, such that the total number of boxes of color $i$ is $\mathsf{v}_i$. This is because our choice of stability implies that the images of the framings generate the vector space $\bigoplus_{i=0}^{n-1} V_{i}$ under the action of the arrows of the doubled $A_n$ quiver. The moment map condition implies that the subspace generated by one particular framing dimension takes the shape of a Young diagram, and the boxes in a partition correspond to linearly independent directions. We think of these partitions as being based at the vertex corresponding to the framing, see Figure \ref{yng3} below. With this understood, the boxes are ``colored" by the vector space that they lie over, see \cite{neg} Section 2.2 for more details.

Let ${\bf V}_p(\aa,\zz)\in K_{\bT}(X)_{loc}[[\zz]]$ be the restriction of vertex function for $X$, defined in Section 7 of \cite{pcmilect}, to a fixed point $p\in X^{\bT}$. As explained in Section 2.2, ${\bf V}_p(\aa,\zz)$ can be computed explicitly.

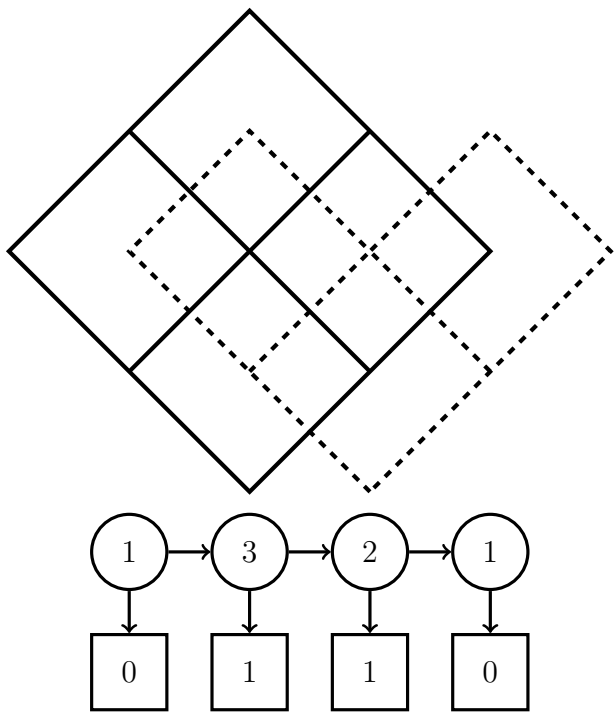
\begin{figure}[!ht]
\centering
\begin{tikzpicture}[roundnode/.style={circle, draw=black, very thick, minimum size=10mm},squarednode/.style={rectangle, draw=black, very thick, minimum size=10mm},scale=.8] 

\node[roundnode](3) at (0,0){1};
\node[roundnode](2) at (-2,0){2};
\node[roundnode](1) at (-4,0){3};
\node[roundnode](0) at (-6,0){1};

\node[squarednode](f3) at (0,-2){0};
\node[squarednode](f2) at (-2,-2){1};
\node[squarednode](f1) at (-4,-2){1};
\node[squarednode](f0) at (-6,-2){0};

\draw[very thick,->] (0)--(1);
\draw[very thick,->] (1)--(2);
\draw[very thick,->] (2)--(3);

\draw[very thick,->] (0)--(f0);
\draw[very thick,->] (1)--(f1);
\draw[very thick,->] (2)--(f2);
\draw[very thick,->] (3)--(f3);

\draw[ultra thick, dashed] (-2,1)--(-6,5)--(-4,7)--(-2,5)--(0,7)--(2,5)--cycle;
\draw[ultra thick, dashed] (0,3)--(-2,5)--(-4,3) ;

\draw[ultra thick] (-4,1)--(-8,5)--(-4,9)--(0,5)--cycle;
\draw[ultra thick] (-6,3)--(-2,7);
\draw[ultra thick] (-2,3)--(-6,7);

\end{tikzpicture}
\caption{A fixed point on the $A_4$ quiver variety determined by dimension $\textsf{v}=(1,3,2,1)$ and framing dimension $\textsf{w}=(0,1,1,0)$. The partitions $(2,2)$ (solid lines) and $(2,1)$ (dashed lines) index a fixed point on the quiver variety, and correspond to the framings at vertices 1 and 2, respectively. Observe that the total number of boxes above each vertex is the dimension corresponding to that vertex. This is precisely the coloring condition.} \label{yng3}
\end{figure}

\subsection{}
The real Lie algebra $\Lie_{\matR}(\bA)$ is naturally equipped with a set of hyperplanes $\{ \alpha^{\perp} \}$, where $\alpha$ runs over the set of $\bA$-characters appearing as $\bA$-weights of the tangent spaces $T_{p} X$ for $p\in X^{\bT}$. 
The complement of these hyperplanes is a union of connected components 
\begin{equation} \label{chamb}
\Lie_{\matR}(\bA)\setminus \{ \alpha^{\perp}\}= \bigcup \mathfrak{C}
\end{equation}
which are called {\it chambers}. Choosing a chamber $\mathfrak{C}$ is equivalent to choosing a cocharacter $f: \matC^\times \to \bA$ disjoint from the hyperplanes $\{\alpha^{\perp}\}$.

\subsection{}
Fix a choice of chamber $\mathfrak{C}\subset \Lie_{\matR}(\bA)$, with corresponding co-character $f: \mathbb{C}^\times \to \bA $. For $p \in X^\bT$, we define the limit of the vertex function with respect to $\mathfrak{C}$ to be
\begin{equation} \label{vlims}
    \textbf{V}_{p}(0_\mathfrak{C}, \bs{z}) :=\lim_{w\to 0} \textbf{V}_{p} (f(w),\bs{z}) \in \matQ(\hbar,q)[[z]]
\end{equation}
We recall that the vertex functions $\textbf{V}_{p}(\bs{a}, \bs{z})$ are balanced in equivariant parameters see Section 6.1 \cite{pcmilect}, which means the coefficients of vertex functions depend on the $\bA$-weights $w$ though a combinations of the form
$$
\dfrac{1-w \alpha}{1- w \beta}
$$
where $\alpha$ and $\beta$ stand for some monomials in $\hbar$ and $q$ (it also easy to see this directly from (\ref{vertild})). Note that this expression has a well defined limits at $w\to0$ and $w\to \infty$. Thus,  (\ref{vlims}) is well defined for any chamber~$\fC$.

As remarked earlier, the fixed point $p$ corresponds to a $(\textsf{w}_0 + \ldots + \textsf{w}_{n-1})$-tuple of partitions. We write the set of these partitions as $\Omega_p$, and coordinates on the torus $\bA$ are thus given by $(a_\lambda)_{\lambda \in \Omega_p}$. So we write
$$
  f(w)=(f_\lambda(w))_{\lambda \in \Omega_{p}} \in \bA
$$
For each $\lambda \in \Omega_p$, we have the generating function $\mathcal{Z}_{\lambda}=\mathcal{Z}_{\lambda}(\hbar,q,(z_i)_{i\in\mathbb{Z}})$, as in Section 1.2, with the parameters $z_i$ relabeled since the corner box of $\lambda$ does not necessarily lie over the vertex $0$.
Define 
$$
c^{\lambda}_i := | \{\square \in \lambda \mid c(\square)=i \} | \ \ \  \text{and} \ \ \ \sigma_\lambda(i):=c^{\lambda}_{i-1}-c^{\lambda}_i
$$
where $c(\square)\in \{0,\ldots,n-1\}$ denotes the color of $\square$. Also, write 
\begin{equation*}
   \mu \prec_{\mathfrak{C}} \lambda \iff \lim_{w \to 0} \frac{f_{\mu}(w)}{f_{\lambda}(w)} = 0
  \end{equation*}
\begin{Theorem} \label{thm2} Let all notation be as above. Then we have
\begin{equation*}
    \begin{aligned}
     \textbf{V}_{p}(0_\mathfrak{C},\boldsymbol{z})= \prod_{\lambda \in \Omega_{p}}  \mathcal{Z}_{\lambda}\left(\hbar, q, \left( z_i^{\#} \right)_{i \in \mathbb{Z}} \right)
    \end{aligned}
\end{equation*}
where 
$$
z_i^{\#}=\left( \frac{\hbar}{q} \right)^{\nu_{\lambda}(i)}z_i
$$
and
\begin{equation*}
    \nu_{\lambda}(i)= \sum\limits_{\mu \prec_{\mathfrak{C}} \lambda} \sigma_{\mu}(i) + \sum\limits_{\lambda \prec_{\mathfrak{C}} \mu} \sigma_{\mu}(i+1) + \sum\limits_{\substack{\mu \prec_{\mathfrak{C}} \lambda \\ b_\mu=i}} 1 
    \end{equation*}
    and $b_\mu$ denotes the color of the corner box of $\mu$.
\end{Theorem}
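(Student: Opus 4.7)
\textbf{Proof plan for Theorem \ref{thm2}.}

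The plan is to start from the explicit formula (\ref{verpow}) for $\textbf{V}_p$ and take the chamber limit term by term. The fixed point $p$ is indexed by the tuple $\Omega_p$, and each Grothendieck root $x_{i,j}$ of the tautological bundle at vertex $i$ belongs to exactly one partition $\lambda \in \Omega_p$: its value $x_{i,j}(p)$ equals the framing parameter $a_\lambda$ times a power of $\hbar$ determined by the content of the corresponding box of $\lambda$. Accordingly, split the polarization as $\mathcal{P} = \sum_{\lambda \in \Omega_p} \mathcal{P}_\lambda + \sum_{\lambda \neq \mu} \mathcal{P}_{\lambda,\mu}$, where $\mathcal{P}_\lambda$ collects weights whose roots and framing parameter both belong to $\lambda$, and $\mathcal{P}_{\lambda,\mu}$ collects the cross weights. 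Up to a relabeling of vertex indices by $b_\lambda$, the self piece $\mathcal{P}_\lambda$ coincides with the polarization of the zero-dimensional quiver variety $X_\lambda$ of Section 2.2, so the portion of the Jackson integral depending only on $\mathcal{P}_\lambda$ and the $\lambda$-variables is precisely what produces $\mathcal{Z}_\lambda$ via Theorem \ref{mthm}. The remaining task is to show that the cross interactions $\mathcal{P}_{\lambda,\mu}$ reduce, in the chamber limit, to pure shifts of the K\"ahler parameters.

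For each weight $w = \prod x_{i,j}^{e_{i,j}}$ of $\mathcal{P}$, the Jackson substitution $x_{i,j} \mapsto x_{i,j}(p)\,q^{d_{i,j}}$ converts the associated factor of $\Phi((q-\hbar)\mathcal{P}) / \Phi((q-\hbar)\mathcal{P}_p)$ into the $q$-Pochhammer ratio $(\hbar w(p))_{d_w} / (q w(p))_{d_w}$, where $d_w = \sum_{i,j} e_{i,j}\, d_{i,j}$; a negative sign on $w$ inverts this ratio. For a cross weight between $\lambda$ and $\mu$ one has $w(p) \sim (a_\lambda/a_\mu)^{\pm 1}\hbar^{\star}$, so in the chamber limit $w(p)$ tends either to $0$ or to $\infty$. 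In the first case $(\hbar w)_d / (qw)_d \to 1$ and the cross factor disappears. In the second, the elementary asymptotic $(\hbar w)_d / (qw)_d \sim (\hbar/q)^{d}$ as $w \to \infty$ (valid for any $d \in \matZ$) forces the cross factor to become $(\hbar/q)^{\pm d_w}$, a pure function of the Jackson indices. Combined with $z_i^{d_{i,j}}$ coming from ${\bf e}(\bs x, \zz)$, this produces a shift $z_i \mapsto (\hbar/q)^{e_{i,j}} z_i$ on the Jackson index $d_{i,j}$. This decouples the $\lambda$- and $\mu$-variables and yields the claimed factorization $\textbf{V}_p(0_\mathfrak{C}, \zz) = \prod_{\lambda \in \Omega_p} \mathcal{Z}_\lambda(\cdots)$.

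It remains to check that summing all such shifts matches the combinatorial formula for $\nu_\lambda(i)$. For a fixed $\lambda$-root $x_{i,j}$ at vertex $i$, the weights of $\mathcal{P}$ containing $x_{i,j}$ are the two arrow weights $x_{i-1,l}^{-1} x_{i,j}$ and $x_{i+1,k} x_{i,j}^{-1}$, the framing weight $a_\mu^{-1} x_{i,j}$ (which appears only when $b_\mu = i$), and the pair of loop weights $-x_{i,j} x_{i,l}^{-1}$ and $-x_{i,l} x_{i,j}^{-1}$ (entering $\mathcal{P}$ with a minus sign). A direct case analysis shows: weights with $x_{i,j}$ in the numerator blow up precisely when their partner belongs to some $\mu$ with $\mu \prec_\mathfrak{C} \lambda$, contributing $+c^\mu_{i-1}$ (arrow from $i-1$), $-c^\mu_i$ (loop), and $+\delta_{b_\mu,i}$ (framing); weights with $x_{i,j}$ in the denominator blow up precisely when their partner belongs to some $\mu$ with $\lambda \prec_\mathfrak{C} \mu$, contributing $-c^\mu_{i+1}$ (arrow to $i+1$) and $+c^\mu_i$ (loop, with sign reversed because of the minus sign in $\mathcal{P}$). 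Summing these yields $\sum_{\mu \prec_\mathfrak{C} \lambda} \sigma_\mu(i) + \sum_{\lambda \prec_\mathfrak{C} \mu} \sigma_\mu(i+1) + \sum_{\mu \prec_\mathfrak{C} \lambda,\, b_\mu = i} 1$, exactly as claimed. The main obstacle is precisely this bookkeeping: tracking the numerator/denominator placement and the sign flip contributed by the loop weights against the chamber-ordering dichotomy between $w(p)\to 0$ and $w(p)\to\infty$.
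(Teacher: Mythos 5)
Your proposal is correct and follows exactly the route the paper intends: the paper's entire proof of Theorem \ref{thm2} is the sentence ``This follows from a direct computation using (\ref{verpow}) in Section 2.4,'' and your argument is precisely that computation carried out --- splitting $\mathcal{P}$ into self and cross pieces, using $(\hbar w)_d/(qw)_d \to 1$ or $(\hbar/q)^d$ according to whether $w(p)\to 0$ or $\infty$, and matching the resulting K\"ahler shifts against $\nu_\lambda(i)$. The bookkeeping in your final paragraph (arrow, framing, and sign-reversed loop contributions sorted by the $\prec_{\mathfrak{C}}$ dichotomy) correctly reproduces $\sum_{\mu \prec_{\mathfrak{C}} \lambda} \sigma_{\mu}(i) + \sum_{\lambda \prec_{\mathfrak{C}} \mu} \sigma_{\mu}(i+1) + \sum_{\mu \prec_{\mathfrak{C}} \lambda,\, b_\mu=i} 1$, so your writeup supplies the details the paper omits.
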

\begin{proof}
This follows from a direct computation using (\ref{verpow}) in Section 2.4. 
\end{proof}
\subsection{}
A similar computation shows that the limit of the vertex function for affine type $A_n$ varieties likewise factorizes into a product of $\mathcal{Z}_{\lambda}$, with certain shifts of the parameters $z_i$. The difference in this case is that the presence of an oriented cycle in the quiver means that we must take the coloring of the paritions modulo $n$ and allow for arbitrarily wide partitions, provided the total number of boxes of a given color agrees with the dimension vector $\mathsf{v}$. Furthermore, the indices on $z_i$ must be taken modulo $n$.
 
\subsection{}

3d mirror symmetry, also known as symplectic duality, is a conjecture which associates to every symplectic variety $X$, a dual variety $X'$. See also \cite{MirSym2,MirSym1} for the recent developments of the 3d mirror symmetry conjectures. In the case where $X$ is a Nakajima quiver variety, 3d mirror symmetry provides the following data 
\begin{itemize}
    \item An isomorphisms of tori
\begin{equation} \label{3dmiriso}
\kappa: \, \bA'\times \bK'\times \matC_{\hbar'}^{\times} \times \matC^{\times}_{q} \ \ \longrightarrow \bA \times \bK \times \matC_{\hbar}^{\times} \times \matC^{\times}_{q}
\end{equation}
where $\bK'$ and $\bA'\times \matC_{\hbar'}^{\times}=\bT'$ are the K\"ahler and the equivariant tori of $X'$.

\item A bijection of sets of fixed points
\begin{equation*} \label{bij}
\textsf{b}: X^{\bT} \to  (X')^{\bT'}
\end{equation*} 
\end{itemize}
The restriction of $\kappa$ provides an isomorphism
\begin{equation*}
    \bar{\kappa}: \bA \to \bK', \ \ \ \bK \rightarrow \bA'
\end{equation*}

The map $\kappa$ also provides an identification of chambers with effective cones of the $3d$ mirror variety:
\begin{equation*}
d \bar{\kappa} (\mathfrak{C})=C_{\textrm{eff}}(X'), \  \ \ \ d \bar{\kappa} (C_{\textrm{eff}}(X))=\mathfrak{C}',
\end{equation*}
where $d\bar{\kappa}$ is the induced map of the Lie algebras and $C_{\textrm{eff}}(Y)\subset H^{2}(Y,\matR)$ is a certain cone associated to the choice of stability parameter for any Nakajima variety $Y$. It is therefore natural to think that the $3d$ mirror symmetry provides a pair $(X',\mathfrak{C}')$ for each pair $(X,\mathfrak{C})$. 

\subsection{}
Let $(X,\mathfrak{C})$ be a Nakajima variety of finite or affine $A_n$ type, and let $(X',\mathfrak{C}')$ be the variety and chamber related by $3d$-mirror symmetry. 
For $p\in X^{\bT}$ let $\textsf{b}(p)$ be the corresponding fixed point on the dual variety $X'$. Recall that $X^{\bT}$ is indexed by $(\mathsf{w}_0+ \ldots + \mathsf{w}_{n-1})$-tuples of colored partitions, where $\mathsf{w}\in \mathbb{N}^n$ is the framing dimension vector of $X$. The chamber $\mathfrak{C}'=d\bar{\kappa}(C_{\textrm{eff}}(X))$ provides a decomposition
$$
T_{\textsf{b}(p)} X' = N^{+}_{\textsf{b}(p)}\oplus N^{-}_{\textsf{b}(p)}.
$$
where $N^{+}_{\textsf{b}(p)},N^{-}_{\textsf{b}(p)}$ are the subspaces whose $\bA'$-characters take positive or negative values on $\mathfrak{C}'$, respectively.
We identify these spaces with their $K$-theory classes $N^{
\pm}_{\textsf{b}(p)} \in K_{\bT'}(pt)$. For $N \in K_{\bT'}(pt)$ given by a polynomial $N=w_1+\dots +w_m$ we abbreviate 
$$
\Xi(b,N)=\prod_{i=1}^m \prod_{n=0}^{\infty} \frac{1-b w_i q^n}{1-w_i q^n}
$$  
The following was conjectured in \cite{dinksmir}:

\begin{Conjecture}\label{conj}
{\it The vertex functions of $X$ with vanishing equivariant parameters are given by the Taylor series expansions of the following functions
\begin{equation*}\label{hypot}
\begin{array}{|c|}  
\hline \\
\ \ \ \kappa^{*} {\bf V}_p(0_{\mathfrak{C}},\zz)= \Xi(q/\hbar', (N^{-}_{\textsf{b}(p)})^{*} )\ \ \ \\
\\
\hline
\end{array}
\end{equation*}	
where $\kappa^{*}$ stands for substitution (\ref{3dmiriso}) and $(N^{-}_{\textsf{b}(p)})^{*}$ is the $\bT'$-module dual to $N^{-}_{\textsf{b}(p)}$ (i.e. the weights of $(N^{-}_{\textsf{b}(p)})^{*}$ are inverses of the weights of $N^{-}_{\textsf{b}(p)}$). }
\end{Conjecture}


\subsection{}
We give the proof of Conjecture \ref{conj} in the case of $X=X_{\lambda}$. The symplectic dual variety $X'$ can be described in the language of slices in affine Grassmannian (\cite{slices},\cite{slices2}) or as a bow variety (\cite{bow}). We proceed with the second option.  Our presentation here is not entirely self-contained, as we use the notations of \cite{bow}, in which case our choice of stability corresponds to $\nu_{\sigma}^{\mathbb{R}}=-1$. 

Bow varieties, like Nakajima quiver varieties, are formed as a GIT quotient of a space whose data is encoded by a combinatorial object, in this case a ``bow diagram." To obtain the bow diagram corresponding to $X_{\lambda}$, we start with the quiver encoding the data. We replace every edge of the quiver with an $o$, and every vertex of the quiver with an edge, labeled by the same number as the corresponding vertex in the quiver. We add an additional $o$ to the left and right ends of the diagram and also add segments extending past these $o$s. We label the segments on the ends by $0$. To account for the framing, we place an $x$ on the edge corresponding to the $0$th vertex, and we label each edge adjacent to this $x$ by the number that was previously on the edge before the $x$ was added. For example, see Figure \ref{bow}.

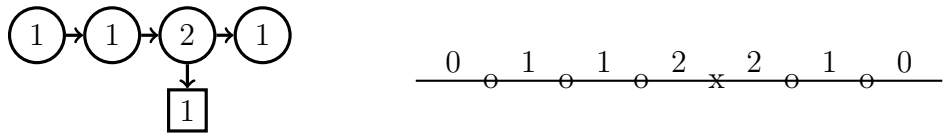
\begin{figure}[ht]
\begin{minipage}{.5\textwidth}
\centering
\begin{tikzpicture}[roundnode/.style={circle, draw=black, very thick, minimum size=5mm},squarednode/.style={rectangle, draw=black, very thick, minimum size=5mm}] 

\node[roundnode](1) at (1,-1){1};
\node[roundnode](0) at (0,-1){2};
\node[roundnode](-1) at (-1,-1){1};
\node[roundnode](-2) at (-2,-1){1};

\node[squarednode](f0) at (0,-2){1};

\draw[very thick,->] (-2)--(-1);
\draw[very thick,->] (-1)--(0);
\draw[very thick,->] (0)--(1);
\draw[very thick, ->] (0)--(f0);
\end{tikzpicture}
\end{minipage}
\begin{minipage}{.5\textwidth}
\centering
\begin{tikzpicture}[roundnode/.style={circle, draw=black, very thick, minimum size=5mm},squarednode/.style={rectangle, draw=black, very thick, minimum size=5mm}] 
\node at (5/2,-3/4){0};
\node at (2,-1){o};
\node at (3/2,-3/4){1};
\node at (1,-1){o};
\node at (1/2,-3/4){2};
\node at (0,-1){x};
\node at (-1/2,-3/4){2};
\node at (-1,-1){o};
\node at (-3/2,-3/4){1};
\node at (-2,-1){o};
\node at (-5/2,-3/4){1};
\node at (-3,-1){o};
\node at (-7/2,-3/4){0};
\draw[thick] (-4,-1)--(3,-1);
\end{tikzpicture}
\end{minipage}
\caption{The left diagram is the quiver data for the quiver variety $X_{\lambda}$ for $\lambda=(3,2)$. The diagram on the right encodes this data as a bow diagram.}\label{bow}
\end{figure}

To obtain the bow diagram of $X_{\lambda}'$, we simply interchange all $x$'s and $o$'s in the bow diagram of $X_{\lambda}$, and leave the dimension data and stability condition the same, see \cite{bow} Section 7.1.

\subsection{}
The torus action on a bow variety can be extracted from the bow diagram as follows. For each $x$, there is the data of linear maps (in notations of \cite{bow}):
$$
A_x: V_{x^-} \to V_{x^+}, \quad  B_{\pm} \in Hom(V_{x^\pm},V_{x^\pm}), \quad b_x:V_{x^-} \to \mathbb{C}, \quad a_x:\mathbb{C} \to V_{x^+}$$
where $V_{x^-}$ and $V_{x^+}$ are vector spaces fixed by the choice of dimension. There is a $\mathbb{C}^\times$-action on this data as follows:
$$
u_x \cdot (A_x,B_{x^-},B_{x^+},a_x,b_x) = (A_x,B_{x^-},B_{x^+},a_x u_x^{-1},u_{x} b_x)
$$
where $u_x$ denotes the coordinate on $\mathbb{C}^\times$. We obtain a copy of $\mathbb{C}^\times$ for every $x$ in the bow diagram.

There is an additional $\mathbb{C}^\times$ action, which corresponds to the action of the parameter $\hbar$, but we will ignore this here for the sake of simplicity.

In addition, the (extended) K\"ahler torus for a bow variety can be extracted from the bow diagram as 
$$
\widetilde{\bK}= \left(\mathbb{C}^\times\right)^{\text{number of } o's}
$$
We use the term ``extended K\"ahler torus", since $\widetilde{\bK}$ fits into a natural exact sequence
$$
1 \longrightarrow \mathbb{C}^\times \longrightarrow \widetilde{\bK} \longrightarrow \bK \longrightarrow 1
$$
where $\bK$ is the usual K\"ahler torus. The map $\widetilde{\bK} \to \bK$ is given by $(\zeta_{i}) \mapsto (\frac{\zeta_{i-1}}{\zeta_{i}})$, where $\zeta_{i}$ are the coordinates on $\widetilde{\bK}$,  In other words, the usual K\"ahler parameters are given by $z_i=\frac{\zeta_{i-1}}{\zeta_{i}}$.
Since the 3d-mirror variety can be obtained by interchanging $x$'s and $o$'s, this description of the equivariant and K\"ahler tori provides a natural identification of $\bA \cong \widetilde{\bK'}$ and $\widetilde{\bK} \cong \bA'$.

\subsection{}
We fix notation as follows. Let $\lambda$ be a partition, with associated dimension vector $\mathsf{v}=(\mathsf{v}_{-r},\ldots,\mathsf{v}_s)$ and quiver variety $X_{\lambda}$. In the bow diagram, there are $r+1$ $o$'s, followed by one $x$, followed by $s+1$ $o$'s. We label the coordinates on the K\"ahler torus as $\zeta_{i}$, $i=-r-1,\ldots,s$.

Interchanging $x$'s and $o$'s gives an obvious identification of $\zeta_{i}$ as coordinates on $\bA'$. 

\begin{Proposition}
There is a unique $\widetilde{\bK}$-fixed point $p'$ on the variety $X_{\lambda}'$. The $\widetilde{\bK}$ character of the tangent space of $X_{\lambda}'$ at $p'$ is
$$
T_{p'}X_{\lambda}' = \sum_{\square \in \lambda} z_{\square} |_{\hbar=q} + z_{\square}^{-1} |_{\hbar=q} \in K_{\widetilde{\bK}}(pt)
$$
where $z_{\square}$ is as in section 1.4 and $z_{i}=\frac{\zeta_{i-1}}{\zeta_{i}}$, and the notation $z_{\square}|_{\hbar=q}$ indicates the substitution of $\hbar=q$ into the expression for $z_{\square}$.
\end{Proposition}
\begin{proof}
As mentioned above, we use the notations of \cite{bow}. We have vector spaces $V_{i}$, $i=-r-1,\ldots,s$ and in particular, $\dim V_{-1}= \dim V_{0} = \mathsf{v}_0$. Let $(A_i,B_i,a_i,b_i,C,D)$ be a representative of a $\widetilde{\bK}$-fixed point on $X_{\lambda}'$, where $A_i\in Hom(V_{i},V_{i+1})$ for $i\neq -1$, $C\in Hom(V_{-1},V_{0})$, $D\in Hom(V_{0},V_{-1})$. By \cite{bow} Proposition 2.8, our choice of stability implies that there is no proper subspace of $\bigoplus_{i} V_i$ containing the image of all $a_i$ closed under the action of all $A_i$, $C$, and $D$ and closed under taking preimages under each $A_i$. By definition of the $\widetilde{\bK}$ action, this implies that the $\widetilde{\bK}$-weights of the tautological bundles $\mathcal{V}_j$ at the fixed point are just $\zeta_{j_1},\ldots, \zeta_{j_{\mathsf{v}_j}}$ where each $j_k \in \{-r-1,\ldots,s\}$. The conditions $\textbf{(S1)}$ and $\textbf{(S2)}$ from \cite{bow} imply that $A_i$ is injective for $i<-1$ and is surjective for $i\geq 0$. This means that if $i<0$, the $\widetilde{\bK}$-weights of $\mathcal{V}_i$ at the fixed point are the same as the weights of $\mathcal{V}_{i+1}$, assuming that $\mathsf{v}_{i+1}=\mathsf{v}_{i}$. If $\mathsf{v}_{i+1}=\mathsf{v}_{i}+1$, then $\mathcal{V}_i$ has an additional weight of $\zeta_i$. These are the only two options, as the dimensions arise from $\lambda$, a partition. A similar argument gives the weights of $\mathcal{V}_i$ for $i\geq 0$ at the fixed point. In fact, this also proves that there is exactly one fixed point.

The discussion in Section 2.5 of \cite{bow}, interpreted with respect to $\widetilde{\bK}$-equivariance, describes the tangent space at the fixed point in terms of virtual bundles associated to the tautological bundles. From this, a straightforward calculation shows that 

$$
T_{p'}X_{\lambda}' = \sum_{\substack{i \in A \\ j \in B}} \frac{\zeta_i}{\zeta_j} + \frac{\zeta_j}{\zeta_i}
$$
where $A=\{i\geq 0 \mid \mathsf{v}_{i}-\mathsf{v}_{i+1}=1\}$ and $B=\{i< 0 \mid \mathsf{v}_{i+1}-\mathsf{v}_{i}=1\}$.

Since $z_i=\frac{\zeta_{i-1}}{\zeta_i}$, a straightforward calculation shows that this is exactly the statement of the proposition.
\end{proof}

Ignoring the action of $\hbar'$, the previous Proposition, along with Theorem \ref{mthm} implies Conjecture \ref{conj} in the case of $X_{\lambda}$. 
\subsection{}
For an arbitrary type $A$ quiver variety $X$ and a fixed point $p$ indexed by partitions $(\lambda_i)$, the argument above, along with the explicit bijection on fixed points (\cite{futurebow}, \cite{naksatake}), implies that the $\widetilde{\bK}$-character of the tangent space $T_{b(p)}X'$ at the corresponding fixed point is 
$$T_{b(p)} X' =\sum_{\lambda \in \Omega_p} \sum_{\square \in \lambda} z_{\square} |_{\hbar=q} + \sum_{\lambda \in \Omega_p} \sum_{\square \in \lambda} z_{\square}^{-1} |_{\hbar=q} \in K_{\widetilde{\bK}}(pt)$$
Along with Theorem \ref{thm2}, this proves Conjecture \ref{conj} in this case.

\printbibliography
\newpage

\vspace{12 mm}

\noindent
Hunter Dinkins\\
Department of Mathematics,\\
University of North Carolina at Chapel Hill,\\
Chapel Hill, NC 27599-3250, USA\\
hdinkins@live.unc.edu

\vspace{3 mm}

\noindent
Andrey Smirnov\\
Department of Mathematics,\\
University of North Carolina at Chapel Hill,\\
Chapel Hill, NC 27599-3250, USA;\\
Steklov Mathematical Institute\\
of Russian Academy of Sciences,\\
Gubkina str. 8, Moscow, 119991, Russia

\end{document}